\theoremstyle{plain}
\newtheorem{theorem}{\bf Theorem}
\newtheorem{conjecture}[theorem]{\bf Conjecture}
\newtheorem{problem}[theorem]{\bf Problem}
\newtheorem{proposition}[theorem]{\bf Proposition}
\newtheorem{corollary}[theorem]{\bf Corollary}
\newtheorem{lemma}[theorem]{\bf Lemma}
\theoremstyle{definition}
\newenvironment{remark}[1][Remark.]{\begin{trivlist}
		\item[\hskip \labelsep {\bfseries #1}]}{\end{trivlist}}
\numberwithin{theorem}{section}
\numberwithin{equation}{section}
\newcommand{\Rea}{{\mathbb R}}
\DeclareMathOperator{\eps}{\varepsilon}
\DeclareMathOperator{\teps}{\tilde{\varepsilon}}
\providecommand{\keyword}[1]
{
  {\small	
  \textbf{\textit{Keywords:~}} #1}
}
\renewcommand{\eprint}[1]{\href{https://arxiv.org/abs/#1}{arXiv:#1}}
\begin{document}

\title{An approximate version of Brouwer's Laplacian conjecture}

    \author{Alan Lew\footnote{
    Faculty of Mathematics, Technion Israel Institute of Technology, Technion City, Haifa 3200003, Israel. E-mail: \href{mailto:alanlew@technion.ac.il}{alanlew@technion.ac.il}.}}

	\date{}
	\maketitle

\begin{abstract}

Let $G=(V,E)$ be an $n$-vertex graph, $L(G)\in \mathbb{R}^{n\times n}$ its Laplacian matrix, and let $\lambda_1(L(G))\ge \lambda_2(L(G))\ge \cdots\ge \lambda_n(L(G))=0$ denote its eigenvalues. For $1\le k\le n$, let $\varepsilon_k(G)= \sum_{i=1}^k \lambda_i(L(G)) -|E|$. We show that for every $1\le k\le n$,
\[
    \varepsilon_k(G) \le \max_{U\subset V,\, |U|=k} |E_G(U)| + (4k-2)\sqrt{k},
\]
where $E_G(U)$ is the set of edges of $G$ contained in $U$. As an immediate consequence, we obtain that $\varepsilon_k(G)\le \binom{k}{2}+(4k-2)\sqrt{k}$. This improves upon previously known bounds for large values of $k$, and may be seen as an approximate version of a conjecture of Brouwer, stating that $\varepsilon_k(G)\le \binom{k+1}{2}$ for every graph $G$. Moreover, for every $r\ge 2$, if $G$ is a $K_{r+1}$-free graph, we obtain that $\varepsilon_k(G)\le (1-1/r)k^2/2 + (4k-2)\sqrt{k}$, which is tight up to the sub-quadratic term.

Our arguments rely on the study of the largest eigenvalue of a matrix obtained by performing a certain diagonal perturbation on the $k$-th additive compound matrix of $L(G)$. 
Using similar methods, we show that the largest Laplacian eigenvalue of the $k$-th token graph of a graph $G=(V,E)$ is bounded from above by $|E|+4k-2$, obtaining a weak version of a conjecture of Apte, Parekh, and Sud, which predicts that an upper bound of $|E|+k$ should hold. 
All our results also hold, with essentially the same proofs, when the Laplacian matrix is replaced by the signless Laplacian of the graph.

\end{abstract}
\keyword{Laplacian, sum of eigenvalues, additive compound, Brouwer's conjecture, token graph}

\section{Introduction}

For a symmetric matrix $M\in \Rea^{m\times m}$, we denote by $\lambda_i(M)$ the $i$-th largest eigenvalue of $M$, for all $1\le i\le m$. 
Let $G=(V,E)$ be an $n$-vertex graph. The \emph{Laplacian matrix} of $G$, denoted by $L(G)$, is an $n\times n$ matrix defined by
\begin{equation}\label{eq:laplacian}
    L(G)_{u,v}=\begin{cases}
        \deg(u) & \text{if } u=v,\\
        -1 & \text{if } \{u,v\}\in E,\\
        0 & \text{otherwise,}
    \end{cases}
\end{equation}
for every $u,v\in V$, where $\deg(u)$ is the degree of $u$ in $G$, that is, the number of edges of $G$ incident to $u$. It is easy to check that $L(G)$ is symmetric and positive semi-definite, and its smallest eigenvalue is equal to $0$ (as the sum of each row of $L(G)$ vanishes). 

The Laplacian matrix was introduced by Kirchhoff in the nineteenth century in his study of electrical networks \cite{kirchhoff1847ueber}. The study of graph Laplacians was further developed, independently, by Fiedler~\cite{fiedler1973algebraic}, Anderson and Morley \cite{anderson1985eigenvalues}, and Kelmans (see \cite{hammer1996laplacian} and the references within).  
In the decades since, the study of the Laplacian matrix and its eigenvalues has become a central theme in spectral graph theory (see, for example, \cites{merris1994survey,mohar1991survey}), with applications to diverse topics in mathematics, computer science, and other fields, such as clustering algorithms \cite{vonluxburg20007tutorial}, convergence of random walks on graphs \cite{caputo2010aldous}, and the control of multi-agent systems \cite{olfati2007consensus}.

The Laplacian eigenvalues of a graph are known to be closely related to a wide range of graph properties, such as connectivity and expansion parameters \cites{fiedler1973algebraic, mohar1989isoperimetric, alon1985lambda1}, the matching number \cites{brouwer2005matching,gu2022matching}, and even to  topological properties of the graph  \cites{aharoni2005eigenvalues,lew2024laplacian}. In \cite{grone1994laplacian}, Grone and Merris initiated the study of the relations between partial sums of Laplacian eigenvalues and combinatorial properties of graphs. In particular, they showed that the sum of the $k$ largest eigenvalues of the Laplacian matrix of a graph $G$ is bounded from below by the sum of the $k$ largest degrees of $G$. On the other hand, they conjectured, and Bai later proved in \cite{bai2011gronemerris}, that the sum of the $k$ largest Laplacian eigenvalues is bounded from above by the sum of the $k$ largest elements in the conjugate degree sequence of $G$.

For a graph $G=(V,E)$, and $1\le k\le |V|$, let us denote $\eps_k(G)=\sum_{i=1}^k \lambda_i(L(G)) - |E|$. Motivated by Grone and Merris' conjecture, Brouwer proposed the following conjecture.

\begin{conjecture}[Brouwer \cites{brouwer2012book,haemers2010onthesum}]
\label{conj:brouwer}
    Let $G=(V,E)$ be a graph, and let $1\le k\le |V|$. Then,
    \[
    \eps_k(G)\le \binom{k+1}{2}.
    \]
\end{conjecture}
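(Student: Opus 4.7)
The plan is to extend the additive compound matrix machinery used elsewhere in the paper from the approximate setting $\binom{k}{2}+O(k^{3/2})$ all the way down to the exact constant $\binom{k+1}{2}$. The starting point is the identity
\[
\sum_{i=1}^{k}\lambda_i(L(G)) \;=\; \lambda_1\bigl(L(G)^{[k]}\bigr),
\]
where $L(G)^{[k]}$ is the $k$-th additive compound matrix of $L(G)$ acting on $\wedge^k\mathbb{R}^n$. Its diagonal entry at a $k$-subset $I\subseteq V$ equals $d(I)=\sum_{v\in I}d_G(v)$, while the off-diagonal part encodes, with signs determined by an orientation on $\wedge^k\mathbb{R}^n$, the adjacencies of the token graph $F_k(G)$. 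The conjecture then reduces to proving $\lambda_1(L(G)^{[k]})\le |E|+\binom{k+1}{2}$.

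My approach is to seek a decomposition $L(G)^{[k]}=A+B$ in which $\lambda_1(A)\le |E|$ and $\lambda_1(B)\le \binom{k+1}{2}$, and then conclude by Weyl's inequality. A natural candidate for $A$ is a signed variant of the signless Laplacian of $F_k(G)$, whose spectrum is tied to $|E|$, mirroring the paper's second main theorem on $\lambda_1(L(F_k(G)))$. The matrix $B$ should absorb the contribution $2|E(I)|$ on the diagonal coming from edges lying entirely inside some $k$-subset $I$. To calibrate the split I would compute $L(G)^{[k]}$ explicitly on the extremal graph $G=K_{k+1}$, where Brouwer's bound is attained with equality, so that the decomposition is forced and $A$, $B$ can be reverse-engineered from the structure of the wedge space $\wedge^k\mathbb{R}^{k+1}$.

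The hard step is the second bound $\lambda_1(B)\le\binom{k+1}{2}$. If $B$ were simply the diagonal matrix with entries $2|E(I)|$, one would get $\lambda_1(B)\le 2\binom{k}{2}=k(k-1)$, strictly larger than $\binom{k+1}{2}$ as soon as $k\ge 3$, so the naive choice is already not tight. To shave off the extra $\binom{k}{2}-k$, one must realize $B$ as the difference between a carefully chosen diagonal and a signed off-diagonal part, and exploit cancellation. The natural tool is a $\pm 1$ diagonal similarity applied to $L(G)^{[k]}$ that aligns the signs of edges into blocks corresponding to cliques of $G$; this is precisely the point where the paper's Hoffman-type estimates have to pay a $(4k-2)\sqrt{k}$ error, and it is where any exact proof must locate an algebraic identity rather than an inequality.

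As a fallback I would pair the additive compound approach with the Grone--Merris--Bai inequality $\sum_{i=1}^{k}\lambda_i(L(G))\le \sum_{i=1}^{k}d_i^{*}$, using each in the regime where the other is loose: the degree-sequence bound treats graphs with few high-degree vertices well, while the additive compound handles near-regular dense graphs where the conjugate degree sequence $d_i^*$ is essentially flat. A case split on $d_i^{*}$ should isolate configurations close to $K_{k+1}$, where Brouwer is tight, from the remaining graphs, where both bounds have slack that may be traded off. Matching the precise constant $\binom{k+1}{2}$ uniformly is, of course, the obstacle that has kept the conjecture open for more than a decade, and any serious attempt along these lines must confront it directly at the sign-cancellation step above, rather than circumvent it.
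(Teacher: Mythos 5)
This statement is Brouwer's conjecture itself, which the paper does not prove and which remains open; the paper only establishes the approximate version $\eps_k(G)\le \binom{k}{2}+(4k-2)\sqrt{k}$ (Theorem \ref{thm:main_brouwer}), via the perturbed additive compound $M_k(G)$, subadditivity, and the star/matching estimates. Your text is likewise not a proof but a research plan, and its central step is missing. Concretely: the decomposition $L(G)^{[k]}=A+B$ with $\lambda_1(A)\le |E|$ and $\lambda_1(B)\le\binom{k+1}{2}$ is never constructed. You yourself observe that the natural choice of $B$ (the diagonal with entries $2|E_G(I)|$) gives only $\lambda_1(B)\le k(k-1)$, which exceeds $\binom{k+1}{2}$ for $k\ge 3$, and the promised ``sign-cancellation via a $\pm1$ diagonal similarity aligned with cliques'' is an aspiration, not an argument; no candidate similarity is exhibited, and no identity is proved. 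Moreover, the bound $\lambda_1(A)\le|E|$ for a signed analogue of the token-graph Laplacian is itself at the strength of Conjecture \ref{conj:token} (in fact stronger, since the conjectured bound there is $|E|+k$, and even that is open --- the paper only proves $|E|+4k-2$), so your plan bounds one open statement by two others.

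The fallback via Grone--Merris--Bai is also only a sketch: it is known that the Grone--Merris--Bai bound and Brouwer's bound are incomparable (neither implies the other for all $k$ and all graphs), and your proposed case split on the conjugate degree sequence is not carried out, nor is there any quantitative mechanism for ``trading off slack'' near the extremal configuration $K_{k+1}$ plus isolated vertices, where both bounds are essentially tight and there is no slack to trade. In short, the proposal correctly identifies the same algebraic framework the paper uses (additive compounds, Weyl's inequality, a diagonal correction for $|E_G(I)|$), but where the paper deliberately pays a $(4k-2)\sqrt{k}$ error to get an unconditional theorem, your attempt to remove that error consists of naming the difficulty rather than resolving it, so there is no proof here to compare against.
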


Since its introduction, Conjecture \ref{conj:brouwer} has been the focus of much work. The cases $k=1$ and $k=2$ of the conjecture were proved by Haemers, Mohammadian, and Tayfeh-Rezaie in \cite{haemers2010onthesum}. The conjecture has been shown to hold for some special classes of graphs, such as trees \cite{haemers2010onthesum}, regular graphs \cites{ berndsen2010thesis, mayard2010thesis}, split graphs  \cites{ berndsen2010thesis,mayard2010thesis}, connected graphs with maximum degree at most $\sqrt{|V|}/2$ \cite{knill2025remarks}, and graphs with girth at least five \cite{lew2025sums}. For further work around Brouwer's conjecture, see for example \cites{du2012upper,wang2012onaconjecture,das2015energy,chen2018note,chen2019onbrouwers,ganie2020further,rocha2020aas,cooper2021constraints,torres2024brouwer,torres2026critical}.

In previous work \cites{lew2024partition,lew2025sums}, we obtained various weak versions of Conjecture \ref{conj:brouwer}. Particularly, in \cite[Theorem 1.8]{lew2025sums}, we proved that for every $k\ge 1$ and every graph $G=(V,E)$ with $|V|\ge k$, $\eps_k(G)\le k^2$.

For a graph $G=(V,E)$ and $U\subset V$, we denote $E_G(U)=\{e\in E : \, e\subset U\}$. Our main result is the following upper bound on $\eps_k(G)$. 

\begin{theorem}\label{thm:main_density_version}
Let $G=(V,E)$ be a graph, and let $1\le k\le |V|$. Then,
\[
    \eps_k(G) \le \max_{U\subset V,\, |U|=k} |E_G(U)| + (4k-2)\sqrt{k}.
\]
\end{theorem}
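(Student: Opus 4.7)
The plan is to apply the classical identity $\sum_{i=1}^k \lambda_i(A) = \lambda_1(\Delta_k(A))$, valid for any symmetric matrix $A$, where $\Delta_k(A)$ is the $k$-th additive compound of $A$. Setting $M := \Delta_k(L(G))$, this matrix is symmetric and positive semidefinite, is indexed by $k$-subsets $S \subset V$, has diagonal entries $M_{S,S} = \sum_{v\in S}\deg_G(v)$, and has off-diagonal entries $M_{S,T}$ that vanish unless $|S\triangle T| = 2$ and $S\triangle T \in E(G)$, in which case $M_{S,T} = \pm 1$ with the sign determined by the exterior-algebra convention.

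Using the identity $\sum_{v\in S}\deg_G(v) = |E| + |E_G(S)| - |E_G(\bar S)| \le |E| + |E_G(S)|$, Weyl's inequality yields
\[
\lambda_1(M) \;\le\; \max_{S} M_{S,S} + \lambda_1(N) \;\le\; |E| + \max_{|U|=k}|E_G(U)| + \lambda_1(N),
\]
where $N$ denotes the off-diagonal part of $M$. The theorem thus reduces to establishing the key spectral bound $\lambda_1(N) \le (4k-2)\sqrt{k}$.

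The matrix $N$ is a signed adjacency-type matrix of the $k$-token graph $T_k(G)$, and decomposes as $N = \sum_{e \in E(G)} N_e$, where each $N_e$ is supported on the pairs $\{S, S\triangle e\}$ with $|S\cap e| = 1$ and consists of $2\times 2$ blocks $\begin{pmatrix}0 & \pm 1\\ \pm 1 & 0\end{pmatrix}$, each an involution of spectral norm $1$. A triangle inequality or Gershgorin argument only yields $\|N\| \le |E|$, which is far too weak. To reach the $O(k^{3/2})$ target I would introduce a further diagonal perturbation on $N$ --- the ``diagonal perturbation of the additive compound'' hinted at in the abstract --- and bound the spectral norm of the perturbed matrix via a second-moment identity such as $\lambda_1(N)^2 \le \|N^2\|$. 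Here $N^2 = \mathrm{diag}(e(S,\bar S))_S + \sum_{e\ne e'} N_e N_{e'}$; the cross-term sum encodes the joint incidence pattern of pairs of edges of $G$, and a careful combinatorial accounting of it --- combined with a Cauchy--Schwarz step exploiting that each $k$-subset $S$ meets at most $k$ edges incident to any given vertex of $G$ --- should produce the estimate with the required $\sqrt{k}$ factor.

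The main obstacle is this last estimate. Small examples (already $k=2$, $G = K_3$) show that the signs appearing in $N$ are genuinely frustrated: no diagonal signing $\binom{V}{k} \to \{\pm 1\}$ converts $N$ into the unsigned adjacency matrix of $T_k(G)$. Hence the sharpening from the trivial $O(|E|)$ bound to $O(k^{3/2})$ cannot be obtained by gauge-fixing, and must honestly exploit the signed block-involution structure of the $N_e$'s. The related bound $\lambda_1(L(T_k(G))) \le |E| + 4k - 2$ for the token-graph Laplacian, announced in the abstract and said to follow by ``similar methods'', suggests that an analogous but easier argument yields an $O(k)$ correction when the off-diagonal signs are all equal to $-1$, and that the extra $\sqrt{k}$ factor in Theorem~\ref{thm:main_density_version} is the price of dealing with the genuine frustration in the signed additive-compound setting.
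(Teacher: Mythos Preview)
Your reduction is flawed at the decomposition step, and the resulting ``key spectral bound'' $\lambda_1(N)\le (4k-2)\sqrt{k}$ is simply false. Take $k=1$: then $N$ is the off-diagonal part of $L(G)$, namely $-A(G)$, and for the star $G=S_{n}=K_{1,n-1}$ one has $\lambda_1(-A(G))=\sqrt{n-1}$, which is unbounded. More generally, the diagonal of $N^2$ that you write down, $e(S,\bar S)$, already has size $\Theta(|E|)$ for typical $S$, so no second-moment argument on $N$ alone can give an $O(k^{3/2})$ bound. The trouble is that by stripping off the \emph{entire} diagonal you discard the large positive term that would otherwise dominate the off-diagonal contribution; Weyl's inequality applied to this splitting is too lossy.

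The paper's decomposition is different and this difference is the whole point. It subtracts from $L(G)^{[k]}$ only the diagonal matrix $D_k(G)$ with entries $|E_G(\sigma)|$, leaving behind a matrix $M_k(G)$ whose diagonal entry at $\sigma$ is $|\{e\in E: e\cap\sigma\ne\emptyset\}|$. This residual diagonal is crucial: it makes the functional $\teps_k(G)=\lambda_1(M_k(G))-|E|$ subadditive under edge-disjoint union. The bound $\teps_k(G)\le(4k-2)\sqrt{k}$ is then obtained not by any moment or trace computation, but by a structural argument: one reduces (via subadditivity and a vertex-cover decomposition) to bounding $\teps_k$ on stars, where an explicit eigenvalue computation gives $\teps_k(S_n)\le\sqrt{k}$, and one controls the size of the cover using that $\teps_k$ is non-positive on matchings of size $\ge 2k$. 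No spectral norm estimate on the off-diagonal block alone is ever attempted, because none is available.
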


As an immediate consequence, we obtain the following result.

\begin{theorem}\label{thm:main_brouwer}
Let $G=(V,E)$ be a graph, and let $1\le k\le |V|$. Then,
\[
    \eps_k(G) \le \binom{k}{2}+ (4k-2)\sqrt{k}.
\]
\end{theorem}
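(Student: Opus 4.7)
The plan is almost trivial: Theorem \ref{thm:main_brouwer} is a straightforward corollary of Theorem \ref{thm:main_density_version}, and I would simply bound the density term $\max_{U\subset V,\,|U|=k}|E_G(U)|$ by the combinatorial maximum $\binom{k}{2}$.

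More precisely, I would argue as follows. Fix a graph $G=(V,E)$ and $1\le k\le |V|$. For any subset $U\subset V$ with $|U|=k$, the set $E_G(U)$ is a subset of the collection of all unordered pairs of vertices contained in $U$, so $|E_G(U)|\le \binom{|U|}{2}=\binom{k}{2}$ (with equality precisely when $U$ spans a clique in $G$). Taking the maximum over all such $U$ yields
\[
    \max_{U\subset V,\,|U|=k} |E_G(U)| \le \binom{k}{2}.
\]
Plugging this estimate into Theorem \ref{thm:main_density_version} gives $\eps_k(G)\le \binom{k}{2}+(4k-2)\sqrt{k}$, as required.

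There is essentially no obstacle in this step: the entire mathematical content lives in Theorem \ref{thm:main_density_version}. The only choice to justify is that we really only need the crude bound $\binom{k}{2}$ and not a better one, which is visible from the statement. Thus no additional lemmas or case analyses are needed; the proof is a single substitution.
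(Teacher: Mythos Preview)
Your proposal is correct and matches the paper's own argument exactly: the paper also derives Theorem~\ref{thm:main_brouwer} immediately from Theorem~\ref{thm:main_density_version} by bounding $\max_{U}|E_G(U)|\le \binom{k}{2}$. There is nothing to add.
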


This improves upon the best previously known bound $\eps_k(G)\le k^2$ for all $k\ge 61$, and resolves a problem we posed in \cite[Problem 5.3]{lew2024partition}, asking for an ``approximate" version of Conjecture \ref{conj:brouwer}, which may be stated as follows.
\begin{corollary}\label{cor:approximate_brouwer}
    For every $\epsilon>0$ and $k\ge 64/\epsilon^2$, every graph $G=(V,E)$ with $|V|\ge k$ satisfies
    \[
        \eps_k(G)\le (1+\epsilon)\frac{k^2}{2}.
    \]
\end{corollary}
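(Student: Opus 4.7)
The plan is to derive Corollary~\ref{cor:approximate_brouwer} as a direct numerical consequence of Theorem~\ref{thm:main_brouwer}, which is already stated before the corollary. No new spectral input should be needed: the task is purely to absorb the lower-order error term $(4k-2)\sqrt{k}$ into the slack $\epsilon k^2/2$ that we are given between $\binom{k}{2}$ and $(1+\epsilon)k^2/2$.

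First, I apply Theorem~\ref{thm:main_brouwer} to obtain $\eps_k(G)\le \binom{k}{2}+(4k-2)\sqrt{k}$. Next, I use the trivial bound $\binom{k}{2}=k(k-1)/2\le k^2/2$ and the estimate $(4k-2)\sqrt{k}\le 4k^{3/2}$, yielding
\[
\eps_k(G)\le \frac{k^2}{2} + 4k^{3/2}.
\]
It then suffices to check that whenever $k\ge 64/\epsilon^2$ we have $4k^{3/2}\le \epsilon k^2/2$. Dividing both sides by $k^{3/2}$, this is equivalent to $\sqrt{k}\ge 8/\epsilon$, i.e.\ $k\ge 64/\epsilon^2$, which is exactly the hypothesis. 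Combining these inequalities gives $\eps_k(G)\le (1+\epsilon)k^2/2$, as required.

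There is no real obstacle here; the only thing worth double-checking is the constant $64$, which is determined by the factor $4$ in the error term of Theorem~\ref{thm:main_brouwer} (one could in principle sharpen it slightly by retaining the $-2$ in $(4k-2)\sqrt{k}$, but this is not needed for the statement). Because the derivation is a one-line calculation, I would present it inline rather than as a separate numbered proof, immediately after the statement of Theorem~\ref{thm:main_brouwer}.
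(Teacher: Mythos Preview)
Your proposal is correct and matches the paper's approach: the paper simply remarks that Corollary~\ref{cor:approximate_brouwer} follows easily from Theorem~\ref{thm:main_brouwer}, and your computation is exactly the intended one-line verification.
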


Recall that a graph $G$ is called $H$-free if it does not contain a copy of the graph $H$ as a subgraph. We denote by $K_r$ the complete graph on $r$ vertices. By Tur\'an's Theorem (see, for example, \cite[Chapter 4]{vanlint2001course}), the following extension of Theorem \ref{thm:main_brouwer} follows immediately from Theorem \ref{thm:main_density_version}.

\begin{theorem}\label{thm:approximate_brouwer_turan}
  Let $r\ge 2$. Let $G=(V,E)$ be a $K_{r+1}$-free graph, and let $1\le k\le |V|$. Then,
\[
    \eps_k(G) \le \left(1-\frac{1}{r}\right)\frac{k^2}{2}+ (4k-2)\sqrt{k}.
\]  
\end{theorem}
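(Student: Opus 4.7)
The plan is to derive this theorem as an immediate consequence of Theorem \ref{thm:main_density_version}, using Tur\'an's theorem to control the quantity $\max_{U\subset V,\,|U|=k}|E_G(U)|$. Theorem \ref{thm:main_density_version} already reduces the task of upper-bounding $\eps_k(G)$ to the purely combinatorial problem of estimating the maximum number of edges spanned by a $k$-vertex subset of $V$, so nothing spectral remains to be done.

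First I would observe that if $G=(V,E)$ is $K_{r+1}$-free, then for every $U\subset V$ the subgraph of $G$ induced on $U$ is also $K_{r+1}$-free (since a copy of $K_{r+1}$ inside $U$ would already be a copy of $K_{r+1}$ in $G$). Tur\'an's theorem then gives $|E_G(U)|\le (1-1/r)|U|^2/2$ for every such $U$; specialising to $|U|=k$ yields
\[
    \max_{U\subset V,\,|U|=k}|E_G(U)|\le \left(1-\frac{1}{r}\right)\frac{k^2}{2}.
\]
Substituting this bound into Theorem \ref{thm:main_density_version} produces exactly
\[
    \eps_k(G)\le \left(1-\frac{1}{r}\right)\frac{k^2}{2}+(4k-2)\sqrt{k},
\]
as desired.

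There is no real obstacle here: the entire content of the statement is packaged inside Theorem \ref{thm:main_density_version}, and Tur\'an's theorem provides the matching hereditary edge-density bound. The only point requiring a brief check is that Tur\'an's inequality is used in its ``loose'' form $(1-1/r)k^2/2$, which is valid for all values of $k$ (and not only when $r\mid k$); the sharper Tur\'an graph bound would yield the same asymptotics and in particular is absorbed by the $(4k-2)\sqrt{k}$ error term. The remark after Theorem \ref{thm:approximate_brouwer_turan} that the bound is tight up to the sub-quadratic term is then witnessed by taking $G$ to be a Tur\'an graph $T(n,r)$ for $n\gg k$ and applying known lower bounds on $\eps_k$, but this tightness statement is outside what needs to be proved.
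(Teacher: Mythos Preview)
Your proposal is correct and follows exactly the approach in the paper: the authors state that Theorem \ref{thm:approximate_brouwer_turan} ``follows immediately from Theorem \ref{thm:main_density_version}'' via Tur\'an's theorem, which is precisely what you do. Your remark that the ``loose'' form $(1-1/r)k^2/2$ of Tur\'an's bound suffices is a fair point and matches the paper's one-line deduction.
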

The bound in Theorem \ref{thm:approximate_brouwer_turan} is tight up to the sub-quadratic term in $k$ (see Section \ref{sec:concluding_remarks} for details). Note that Theorem \ref{thm:approximate_brouwer_turan} implies that, for every $r\ge 2$, the bound in Brouwer's conjecture holds for every $K_{r+1}$-free graph for all $k\ge 64 r^2$.

For a matrix $M\in \Rea^{m\times m}$ and $1\le k\le m$, the \emph{$k$-th additive compound} of $M$ is a matrix $M^{[k]}\in \Rea^{\binom{m}{k}\times \binom{m}{k}}$ whose spectrum consists of all possible sums of $k$ distinct eigenvalues of $M$ (see Section \ref{sec:compounds} for details). The proof of Theorem \ref{thm:main_density_version} relies on the analysis of the spectral radius (that is, the largest eigenvalue) of a matrix $M_k(G)\in \Rea^{\binom{|V|}{k}\times \binom{|V|}{k}}$, which is obtained by subtracting certain diagonal matrix from the $k$-th additive compound of $L(G)$ (see Section \ref{sec:M_k} for a precise definition).

Let $G=(V,E)$ be a graph, and let $0\le k\le |V|$. The \emph{$k$-th token graph} of $G$, denoted by $F_k(G)$, is the graph on vertex set $\binom{V}{k}=\{\sigma\subset V:\, |\sigma|=k\}$ with edge set
\[
    E_k= \left\{ \{\sigma,\tau\}:\, \sigma\triangle \tau\in E\right\},
\]
where $\sigma\triangle\tau$ denotes the symmetric difference of $\sigma$ and $\tau$. 
Token graphs have been extensively studied in various contexts, often under different names (see, for example, \cites{johns1988generalized,SPITZER1970246,alavi1991double,wright1992n,audenaert2007symmetric,fabila2012token}). 
Note that $F_0(G)$ is just a one-vertex graph, $F_1(G)\cong G$, and, for every $0\le k\le |V|$, $F_k(G)\cong F_{|V|-k}(G)$. 
The study of the Laplacian spectrum of token graphs has received increasing interest in recent years, in works such as \cite{caputo2010aldous, dalfo2021laplacian, dalfo2022algebraic, dalfo2023some, lew2024token}.

Motivated by applications to the analysis of approximation ratios of certain quantum algorithms, Apte, Parekh, and Sud proposed in \cite{apte2025conjectured} the following conjecture.

\begin{conjecture}[Apte, Parekh, Sud {\cite{apte2025conjectured}}]\label{conj:token}
Let $G=(V,E)$ be a graph and let $1\le k\le |V|/2$. Then,
\[
    \lambda_1(L(F_k(G)))\le |E|+k.
\]
\end{conjecture}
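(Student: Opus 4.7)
The plan is to start from the identity
\[
    L(F_k(G)) = L(G)^{[k]} - 2D_E,
\]
where $D_E \in \Rea^{\binom{V}{k} \times \binom{V}{k}}$ is the diagonal matrix with $(D_E)_{\sigma,\sigma} = |E_G(\sigma)|$. This is immediate from the definitions: for $\sigma \neq \tau$, both $L(F_k(G))_{\sigma,\tau}$ and $L(G)^{[k]}_{\sigma,\tau}$ equal $-1$ precisely when $\sigma \triangle \tau \in E$; on the diagonal, $L(G)^{[k]}_{\sigma,\sigma} = \sum_{u\in\sigma}\deg(u)$ overcounts the $F_k(G)$-degree $e_G(\sigma,V\setminus\sigma)$ by exactly $2|E_G(\sigma)|$. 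In particular, $L(F_k(G))$ is itself a diagonally-perturbed additive compound of $L(G)$, so the machinery developed for Theorem \ref{thm:main_density_version} applies directly.

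The next step is an edge-by-edge decomposition. By linearity of the additive compound, $L(G)^{[k]}=\sum_{e\in E}(L_e)^{[k]}$, where $L_e$ denotes the Laplacian of the single-edge graph $(V,\{e\})$, while $D_E=\sum_{e\in E}P_e$, where $P_e$ is the diagonal matrix with $(P_e)_{\sigma,\sigma}=1$ if $e\subset\sigma$ and $0$ otherwise. Hence
\[
    L(F_k(G))=\sum_{e\in E}\bigl((L_e)^{[k]}-2P_e\bigr).
\]
A direct check shows that each summand $(L_e)^{[k]}-2P_e$ is the Laplacian of a perfect matching on the set $\{\sigma : |\sigma\cap e|=1\}$ (with all remaining subsets being isolated vertices); in particular its spectral radius is $2$. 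Equivalently, for any unit vector $x \in \Rea^{\binom{V}{k}}$,
\[
    x^T L(F_k(G)) x = \sum_{\{u,v\}\in E}\;\sum_{S\in\binom{V\setminus\{u,v\}}{k-1}}\bigl(x_{S\cup\{u\}}-x_{S\cup\{v\}}\bigr)^2,
\]
and the problem reduces to bounding this expression by $(|E|+k)\|x\|^2$.

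The main obstacle is to close the gap between the trivial subadditive estimate $\lambda_1(L(F_k(G)))\le 2|E|$ (which follows from summing $\lambda_1$ over the edge decomposition) and the conjectured $|E|+k$. A unit vector $x$ cannot simultaneously attain the top eigenvalue of all the summands $(L_e)^{[k]}-2P_e$, and the proper way to capture this slack---again following the spirit of the perturbation $M_k(G)$ behind Theorem \ref{thm:main_density_version}---is to exhibit a diagonal matrix $D^{\ast}\succeq 0$ such that $L(F_k(G))+D^{\ast}\preceq (|E|+k)I$, with $D^{\ast}$ absorbing, at each subset $\sigma$, the Rayleigh-quotient deficit contributed by edges $e$ that either miss $\sigma$ or lie entirely inside $\sigma$. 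Choosing $D^{\ast}$ sharply enough to reach the constant $k$, rather than something closer to $4k-2$, is the delicate point, and it appears to be exactly what currently separates the known bound $|E|+4k-2$ from the conjectured $|E|+k$.
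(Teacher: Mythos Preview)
The statement you are attempting is Conjecture~\ref{conj:token}, which the paper does \emph{not} prove; the paper only establishes the weaker Theorem~\ref{thm:main_token} (the bound $|E|+4k-2$, or $|E|+2k-1$ in the bipartite case). There is therefore no ``paper's own proof'' to compare against, and your proposal is not a proof either: you explicitly concede in the last paragraph that producing a diagonal correction $D^\ast$ sharp enough to reach $k$ rather than $4k-2$ is ``the delicate point'' that you do not know how to resolve. So what you have written is a strategy sketch, not a proof, and its endpoint coincides with the state of the art recorded in the paper.

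There is also a technical error in the first displayed identity. You claim $L(F_k(G)) = L(G)^{[k]} - 2D_E$, justified by saying that for $\sigma\ne\tau$ both matrices have entry $-1$ when $\sigma\triangle\tau\in E$. This is false: by~\eqref{eq:additive_compound} the off-diagonal entry of $L(G)^{[k]}$ is $-\text{sign}(\sigma,\tau)\in\{\pm 1\}$, whereas $L(F_k(G))_{\sigma,\tau}=-1$ always. The diagonals do agree, but the two matrices are genuinely different (and not diagonally similar, as one checks already for $k=2$, $|V|=3$). Consequently your claim that $(L_e)^{[k]}-2P_e$ \emph{is} the Laplacian of a matching is also literally false, although it is isospectral to one (each $2\times 2$ block $\begin{psmallmatrix}1&\mp1\\\mp1&1\end{psmallmatrix}$ still has eigenvalues $0,2$), so the spectral-radius conclusion survives. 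The Rayleigh-quotient formula you write is correct independently of the faulty identity --- it is simply the standard Laplacian quadratic form of $F_k(G)$ --- and indeed that formula, together with the subadditivity/star/matching machinery of Sections~\ref{sec:subadditive}--\ref{sec:token}, is exactly how the paper reaches $|E|+4k-2$. Closing the remaining gap to $|E|+k$ is, as you say, open.
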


The structure of the matrix $L(F_k(G))$ is very similar to that of $L(G)^{[k]}$ --- the $k$-th additive compound of $L(G)$, whose largest eigenvalue is exactly $\sum_{i=1}^k\lambda_i(L(G))$. Hence, Conjecture \ref{conj:token} may be seen as a natural analogue of Brouwer's conjecture in the context of token graphs.

Here, we prove the following weak version of Conjecture \ref{conj:token}.

\begin{theorem}\label{thm:main_token}
 Let $G=(V,E)$ be a graph and let $1\le k\le |V|/2$. Then,\[    \lambda_1(L(F_k(G)))\le |E|+4k-2.\]  
 Moreover, if $G$ is bipartite, then $ \lambda_1(L(F_k(G)))\le |E|+2k-1$.
\end{theorem}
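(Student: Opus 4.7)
The plan is to apply the matrix-theoretic machinery underlying Theorem \ref{thm:main_density_version}---namely, the spectral analysis of a diagonally perturbed $k$-th additive compound---to the token graph setting. The starting point is the close relation between $L(F_k(G))$ and $L(G)^{[k]}$: both matrices act on $\mathbb{R}^{\binom{V}{k}}$ and share the off-diagonal sparsity pattern $\{\{\sigma,\tau\}:\sigma\triangle\tau\in E\}$, with off-diagonal entries of absolute value $1$. They differ only in the diagonal, where $L(G)^{[k]}_{\sigma\sigma} - L(F_k(G))_{\sigma\sigma} = 2|E_G(\sigma)|$, and in the off-diagonal signs (uniformly $-1$ in $L(F_k(G))$, but $\pm 1$ in $L(G)^{[k]}$ according to the exterior-algebra convention).

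By linearity of the additive compound, $L(G)^{[k]} = \sum_{e\in E} L_e^{[k]}$, where $L_e = (\delta_u-\delta_v)(\delta_u-\delta_v)^{\top}$, while $L(F_k(G)) = \sum_{e\in E} L_{F,e}$, where $L_{F,e}$ is the Laplacian of the matching in $F_k(G)$ induced by $e$. A direct entry-wise computation of $L_e^{[k]}$ shows that $L_e^{[k]}$ and $L_{F,e}$ coincide on each 2-dimensional block $\operatorname{span}\{\delta_{S\cup\{u\}},\delta_{S\cup\{v\}}\}$ up to an off-diagonal sign $\epsilon(\sigma,\tau)$ produced by the exterior-algebra reordering, while $L_e^{[k]}$ additionally contributes $2\,\delta_\sigma$ at every configuration $\sigma\supset e$. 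Summing over $e\in E$ yields an identity of the form
\[
L(F_k(G)) \;=\; L(G)^{[k]} \;-\; 2\,D_k(G) \;-\; R,
\]
where $D_k(G)=\operatorname{diag}(|E_G(\sigma)|)_{\sigma}$ and $R$ is a symmetric, entrywise-nonnegative matrix with entries in $\{0,2\}$ on off-diagonal positions $(\sigma,\tau)$ with $\sigma\triangle\tau\in E$, recording exactly the sign disagreements between $L(G)^{[k]}$ and $L(F_k(G))$.

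The main step is to bound $\lambda_1(L(F_k(G)))$ through a Rayleigh-quotient analysis exploiting this decomposition, analogous to the bound on $\lambda_1$ of a diagonal perturbation of $L(G)^{[k]}$ that underlies the proof of Theorem \ref{thm:main_density_version}, but sharpened to yield a linear-in-$k$ correction rather than the $(4k-2)\sqrt{k}$ correction arising in that more general setting. The improvement should be possible because $L(F_k(G))$ is a genuine graph Laplacian, with $\mathbf{1}$ in its kernel, allowing tighter control of the Rayleigh quotient over the admissible test vectors. For bipartite $G$, the signs $\epsilon(\sigma,\tau)$ contributing to $R$ admit a sharper analysis via the bipartition, and the contribution of $R$ can be absorbed at half the cost, yielding the stronger bound $|E|+2k-1$. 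In the general case the sign structure is less constrained, producing a factor-of-$2$ loss and the bound $|E|+4k-2$. The signless Laplacian analog follows by running the same argument with $Q(G)^{[k]}$ in place of $L(G)^{[k]}$.

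The main obstacle is the spectral control of the corrective matrix $R$: although its entries are bounded by $2$, its effect on $\lambda_1$ need not be small a priori, and the heart of the argument is to show that the joint shift produced by $-2D_k(G)-R$ moves the top of the spectrum by at most $O(k)$. A secondary obstacle is handling the non-bipartite case without a uniform diagonal conjugation aligning the exterior-algebra signs with the uniform $-1$ sign pattern of $L(F_k(G))$---this sign mismatch is precisely what inflates $2k-1$ into $4k-2$.
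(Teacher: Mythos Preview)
Your proposal is not a proof: it sets up a decomposition $L(F_k(G)) = L(G)^{[k]} - 2D_k(G) - R$ and correctly identifies the obstacle (controlling $R$), but never carries out the ``Rayleigh-quotient analysis'' you allude to. The assertion that the linear-in-$k$ bound ``should be possible because $L(F_k(G))$ is a genuine graph Laplacian, with $\mathbf{1}$ in its kernel'' is a hope, not an argument; nothing you wrote explains why the shift $-2D_k(G)-R$ moves the top eigenvalue by at most $O(k)$. Likewise, the bipartite factor-of-two is attributed to the sign structure of $R$, but you give no mechanism by which a bipartition of $G$ tames those signs.

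The paper's proof goes in an entirely different direction and never touches additive compounds for this theorem. It works directly with $\eps_k^T(G)=\lambda_1(L(F_k(G)))-|E|$ and runs the same subadditive machinery as in Section~\ref{sec:subadditive}: $\eps_k^T$ is subadditive under edge-disjoint union; for stars one has $\eps_k^T(S_n)=1$ (this is the crucial quantitative input---compare with $\teps_k(S_n)\le\sqrt{k}$, which is why the Brouwer-type bound carries a $\sqrt{k}$ factor while the token bound does not); and a matching of size $2k$ has $\eps_k^T\le 0$. Proposition~\ref{prop:star_bound} with $K=1$ and $t=2k$ then gives $\eps_k^T(G)\le 2(2k-1)=4k-2$. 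The bipartite improvement comes from K\"onig's theorem ($\tau=\nu$ rather than $\tau\le 2\nu$) inside Proposition~\ref{prop:star_bound}, not from any sign analysis. So both the source of the linear bound and the source of the bipartite halving are different from what you guessed.
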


Let $G=(V,E)$ be an $n$-vertex graph. The \emph{signless Laplacian} of $G$, denoted by $Q(G)\in \Rea^{n\times n}$, is the matrix defined by
\[
    Q(G)_{u,v}=\begin{cases}
        \deg(u) & \text{if } u=v,\\
        1 & \text{if } \{u,v\}\in E,\\
        0 & \text{otherwise,}
    \end{cases}
\]
for all $u,v\in V$. Ashraf, Omidi, and Tayfeh-Rezaie proposed in \cite{ashraf2013onthesum} a signless Laplacian analogue of Brouwer's conjecture; namely, they conjectured that $\sum_{i=1}^k \lambda_i(Q(G))\le |E|+ \binom{k+1}{2}$ for every graph $G=(V,E)$ and $1\le k\le |V|$. Similarly, Apte, Parekh, and Sud proposed in \cite{apte2025conjectured} a signless Laplacian version of Conjecture \ref{conj:token}, namely that $\lambda_1(Q(F_k(G)))\le |E|+k$ for all $G=(V,E)$ and $1\le k\le |V|/2$. Let us note that all the results proved in this paper extend, with essentially the same proofs, to the context of signless Laplacian matrices. For simplicity of exposition, we omit the details.

The paper is organized as follows. In Section \ref{sec:preliminaries}, we present some background material on symmetric matrix eigenvalues and on additive compound matrices, which we will later need. In Section \ref{sec:subadditive}, we introduce the notion of \emph{subadditive functions} on graphs (namely, functions on graphs that are subadditive with respect to edge-disjoint unions, of which $\eps_k(G)$ is an example) and present some basic results on them, extending known facts about $\eps_k(G)$. In Section \ref{sec:M_k} we introduce the matrix $M_k(G)$, determine its relation to the function $\eps_k(G)$, and establish some of its basic properties. In Section \ref{sec:M_k_stars_matchings}, we prove some results about the $M_k$-eigenvalues of star graphs and matching graphs, and use them to finish the proof of Theorem \ref{thm:main_density_version}. In Section \ref{sec:token}, we deal with the Laplacian eigenvalues of token graphs, establishing Theorem \ref{thm:main_token}. We close with Section \ref{sec:concluding_remarks}, in which we present some related open problems and conjectures.

\section{Preliminaries}\label{sec:preliminaries}

\subsection{An eigenvalue inequality}

We will need the following classical result due to Weyl.
\begin{lemma}[Weyl (see, for example, {\cite[Theorem 2.8.1]{brouwer2012book}})]
    \label{lemma:weyl}
    Let $A,B\in \Rea^{m\times m}$ be symmetric matrices. Then,
    \[
        \lambda_1(A+B)\le \lambda_1(A)+\lambda_1(B).
    \]
\end{lemma}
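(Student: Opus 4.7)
The plan is to derive Weyl's inequality from the Rayleigh quotient characterization of the largest eigenvalue of a symmetric matrix. Recall that if $M\in\Rea^{m\times m}$ is symmetric, the spectral theorem gives an orthonormal eigenbasis, from which one obtains the variational formula
\[
    \lambda_1(M)=\max_{x\in\Rea^m,\, \|x\|=1} x^T M x.
\]
I would take this identity as the starting point (it is a standard fact and could either be quoted or proved in one line from the spectral decomposition).

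The main step is then a direct computation. For any unit vector $x\in\Rea^m$, linearity of the quadratic form together with the variational formula applied separately to $A$ and to $B$ yields
\[
    x^T(A+B)x = x^T A x + x^T B x \le \lambda_1(A)+\lambda_1(B).
\]
Since $A+B$ is symmetric (as the sum of two symmetric matrices), the variational formula also applies to $A+B$, and taking the maximum of the left-hand side over unit vectors $x$ gives
\[
    \lambda_1(A+B)=\max_{\|x\|=1} x^T(A+B)x \le \lambda_1(A)+\lambda_1(B),
\]
which is the desired inequality.

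There is essentially no obstacle here: the only substantive input is the Rayleigh quotient characterization, and once that is available the argument is a single line. The key conceptual point worth emphasizing is that the same vector $x$ appears in both $x^T A x$ and $x^T B x$, so one cannot hope for equality in general (the respective maximizers of $x\mapsto x^T A x$ and $x\mapsto x^T B x$ need not coincide), which is exactly the source of the inequality.
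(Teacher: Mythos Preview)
Your proof is correct; this is the standard one-line argument via the Rayleigh quotient characterization. The paper does not actually prove this lemma at all --- it simply states it as a classical result of Weyl and gives a reference, so there is nothing to compare against.
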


\subsection{Additive compound matrices}\label{sec:compounds}

Let $m\ge 1$ and $1\le k\le m$. Let $[m]=\{1,2,\ldots,m\}$. For two sets $\sigma,\tau\in \binom{[m]}{k}=\{\eta\subset [m]:\, |\eta|=k\}$ with $|\sigma\cap \tau|=k-1$, denote $\text{sign}(\sigma,\tau)=(-1)^{|\{ r\in \sigma\cap \tau,\, i<r<j\}|}$, where $i<j$ are the two unique elements in the symmetric difference $\sigma\triangle \tau$.  

Let $M\in \Rea^{m\times m}$ be a matrix. For $1\le k\le m$,  the \emph{$k$-th additive compound} of $M$ is the matrix $M^{[k]}\in \Rea^{\binom{m}{k}\times \binom{m}{k}}$ defined by
\begin{equation}\label{eq:additive_compound}
    M^{[k]}_{\sigma,\tau}=\begin{cases}
        \sum_{i\in \sigma} M_{i,i} & \text{if } \sigma=\tau,\\
        \text{sign}(\sigma,\tau)\cdot M_{i,j} & \text{if } |\sigma\cap \tau|=k-1,\, \sigma\setminus \tau=\{i\},\, \tau\setminus\sigma=\{j\},\\
        0 & \text{otherwise,}
    \end{cases}
\end{equation}
for all $\sigma,\tau\in \binom{[m]}{k}$.

The additive compound operator was studied, for example, in \cite{wielandt1967topics,schwarz1970totally, london1976derivations, fiedler1974additive}. As its name suggests, the $k$-th additive compound operator is additive, that is, $(A+B)^{[k]}=A^{[k]}+B^{[k]}$ for every $A,B\in \Rea^{m\times m}$. Moreover, the eigenvalues of the $k$-th additive compound of a matrix $M$ consist of all possible sums of $k$ distinct eigenvalues of $M$. More precisely, we have the following result.

\begin{lemma}[See, for example, {\cite[Thm. F.5]{marshall1979inequalities}, \cite[Thm. 2.1]{fiedler1974additive}}]\label{lemma:additive_compound}
    Let $M\in \Rea^{m\times m}$ be a matrix with eigenvalues $\lambda_1,\ldots,\lambda_m$. Then, the eigenvalues of $M^{[k]}$ are $\lambda_{i_1}+\cdots+\lambda_{i_k}$, for all $1\le i_1<\cdots<i_k\le m$.
\end{lemma}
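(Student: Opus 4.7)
The plan is to realize $M^{[k]}$ as the matrix representation, in the standard basis, of a natural linear operator induced by $M$ on the $k$-th exterior power $\bigwedge^k \mathbb{C}^m$. Let $V=\mathbb{C}^m$ with standard basis $e_1,\ldots,e_m$, and equip $\bigwedge^k V$ with the basis $\{e_\sigma := e_{i_1}\wedge\cdots\wedge e_{i_k}\}$ indexed by $\sigma=\{i_1<\cdots<i_k\}\in\binom{[m]}{k}$. First I would introduce the derivation operator $D_M:\bigwedge^k V\to\bigwedge^k V$ defined on decomposable elements by
\[
D_M(v_1\wedge\cdots\wedge v_k)=\sum_{j=1}^k v_1\wedge\cdots\wedge Mv_j\wedge\cdots\wedge v_k,
\]
which is well-defined because the right-hand side is multilinear and alternating in the arguments.

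The key computation is to verify that the matrix of $D_M$ in the basis $\{e_\sigma\}$ coincides with $M^{[k]}$ as defined in (\ref{eq:additive_compound}). Given $\sigma=\{i_1<\cdots<i_k\}$, I would expand $Me_{i_j}=\sum_\ell M_{\ell,i_j}e_\ell$ and use alternation of the wedge to see that the only surviving contributions to $D_M(e_\sigma)$ are (i) the diagonal terms with $\ell=i_j$, together contributing $\sum_{i\in\sigma}M_{i,i}$ to the coefficient of $e_\sigma$, and (ii) the terms with $\ell\notin\sigma$, each producing the coefficient $M_{\ell,i_j}$ of a wedge that, once reordered into its sorted form, is $e_\tau$ for $\tau=(\sigma\setminus\{i_j\})\cup\{\ell\}$. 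Moving $e_\ell$ into its correct sorted position requires transposing it past the basis vectors indexed by elements of $\sigma\cap\tau$ strictly between $\min(i_j,\ell)$ and $\max(i_j,\ell)$, producing precisely the sign $\text{sign}(\sigma,\tau)$ appearing in (\ref{eq:additive_compound}). This sign bookkeeping is the most delicate part of the argument, but it is a direct check.

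Next, assuming first that $M$ is diagonalizable with complex eigenvectors $u_1,\ldots,u_m$ corresponding to eigenvalues $\lambda_1,\ldots,\lambda_m$, a direct application of the derivation formula gives
\[
D_M(u_{i_1}\wedge\cdots\wedge u_{i_k})=(\lambda_{i_1}+\cdots+\lambda_{i_k})\,u_{i_1}\wedge\cdots\wedge u_{i_k}
\]
for every $\{i_1<\cdots<i_k\}\in\binom{[m]}{k}$. Since the $\binom{m}{k}$ wedge products $u_{i_1}\wedge\cdots\wedge u_{i_k}$ form a basis of $\bigwedge^k V$, this produces all eigenvalues of $M^{[k]}$ with the correct multiplicities, establishing the lemma in the diagonalizable case.

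Finally, to remove the diagonalizability hypothesis, I would invoke a density argument. The characteristic polynomial of $M^{[k]}$ is a polynomial function of the entries of $M$, and so is $\prod_{i_1<\cdots<i_k}(\lambda-\lambda_{i_1}-\cdots-\lambda_{i_k})$, which is a symmetric polynomial in the $\lambda_i$ and hence is expressible in the elementary symmetric functions of the eigenvalues of $M$, which are themselves polynomials in the entries of $M$. Since the two polynomials coincide on the Zariski-dense subset of diagonalizable matrices, they coincide identically, yielding the lemma in general.
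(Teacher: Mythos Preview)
Your proof is correct. The paper does not actually prove this lemma; it is stated with references to \cite{marshall1979inequalities} and \cite{fiedler1974additive} as a known fact, so there is no ``paper's own proof'' to compare against. Your approach via the derivation $D_M$ on $\bigwedge^k\mathbb{C}^m$ is in fact the standard one underlying those references: the additive compound is commonly defined as $\frac{d}{dt}\big|_{t=0}(I+tM)^{(k)}$, where $(\cdot)^{(k)}$ is the multiplicative compound acting on $\bigwedge^k\mathbb{C}^m$, and differentiating the action on decomposable tensors yields exactly your $D_M$. The sign bookkeeping you describe is correct (the key being that $\mathrm{sign}(\sigma,\tau)$ is symmetric in $\sigma,\tau$, so the row--column convention works out), and the density/continuity argument to pass from diagonalizable matrices to arbitrary ones is standard and valid. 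An alternative route, equally short, would be to Schur-triangularize $M$ over $\mathbb{C}$ and observe directly from \eqref{eq:additive_compound} that the additive compound of an upper-triangular matrix is upper-triangular with diagonal entries $\sum_{i\in\sigma}M_{i,i}$; this avoids the density step entirely.
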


\section{Subadditive functions on graphs}\label{sec:subadditive}

Here, we introduce some terminology and basic results which we will later use. 
Let $V$ be a finite set, and let $\mathcal{G}(V)$ be the collection of all graphs on vertex set $V$. We say that a function $f:\mathcal{G}(V)\to \Rea$ is \emph{subadditive} if for every pair of edge-disjoint graphs $G_1, G_2\in \mathcal{G}(V)$, $f(G_1\cup G_2)\le f(G_1)+f(G_2)$.

We will need the following lemma, appearing implicitly, in the special case of $f(G)=\eps_k(G)$, in the work of Haemers, Mohammadian, and Tayfeh-Rezaie \cite{haemers2010onthesum} (see also \cite[Lemma 2.6]{lew2024partition} or \cite[Corollary 2.3]{wang2012onaconjecture} for an explicit statement in the case $f(G)=\eps_k(G)$).

\begin{lemma}\label{lemma:good_subgraph_subadditive}
    Let $V$ be a finite set, and let $f:\mathcal{G}(V)\to \Rea$ be a subadditive function. Then, for every graph $G=(V,E)$ there exists a spanning subgraph $G'=(V,E')$ such that $f(G')\ge f(G)$ and $f(H)>0$ for every non-empty spanning subgraph $H$ of $G'$.
\end{lemma}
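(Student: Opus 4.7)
The plan is to choose $G'$ by a two-step optimization: first, among all spanning subgraphs of $G$, take those attaining the maximum value of $f$; then, among these maximizers, take one with the smallest number of edges. Since $G$ is itself a spanning subgraph of $G$, the value $f(G')$ is at least $f(G)$, so the first required property is immediate.

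For the second property, I would argue by contradiction. Suppose some non-empty spanning subgraph $H=(V,E_H)$ of $G'$ satisfies $f(H)\le 0$. Let $G''=(V,E(G')\setminus E_H)$ be the spanning subgraph obtained from $G'$ by deleting exactly the edges of $H$. Then $G'$ decomposes as the edge-disjoint union of $G''$ and $H$, so subadditivity of $f$ gives
\[
    f(G')\le f(G'')+f(H)\le f(G'').
\]
Hence $G''$ is also a spanning subgraph of $G$ that attains the maximum value of $f$, but $|E(G'')|<|E(G')|$ because $E_H$ is non-empty. This contradicts the minimality of $|E(G')|$ among $f$-maximizers, so no such $H$ can exist.

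There isn't really a substantive obstacle here: the lemma is a clean application of the standard ``pick an extremal example'' technique, and the only ingredient beyond that is the hypothesized subadditivity property, used in exactly one line. The only point worth highlighting is that the choice of $G'$ must be the minimum-edge maximizer rather than just any maximizer, since otherwise one could not rule out the existence of spanning subgraphs $H$ of $G'$ with $f(H)\le 0$.
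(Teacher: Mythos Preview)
Your proof is correct and uses the same key step as the paper: if some non-empty spanning subgraph $H$ of the current graph has $f(H)\le 0$, delete its edges and apply subadditivity to see that $f$ does not decrease. The only difference is packaging---the paper phrases this as an induction on $|E|$, while you phrase it as an extremal (maximize $f$, then minimize the edge count) argument; these are interchangeable presentations of the same idea.
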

\begin{proof}
    We argue by induction on $|E|$. If $|E|=0$, the claim follows trivially (choosing $G'=G$). Assume $|E|>0$. If $f(H)>0$ for every non-empty spanning subgraph $H$ of $G$, we are done (again, choosing $G'=G$). Otherwise, let $H$ be a non-empty spanning subgraph of $G$ with $f(H)\le 0$, and let $G'$ be the graph obtained from $G$ by removing the edges of $H$. By the induction hypothesis, there is a spanning subgraph $G''$ of $G'$ with $f(H')>0$ for every non-empty spanning subgraph $H'$ of $G''$, such that $f(G'')\ge f(G')$. Note that $G''$ is a spanning subgraph of $G$ as well, and, by the subadditivity of $f$, we have
    \[
        f(G)\le f(G')+f(H) \le f(G'')+0=f(G''),
    \]
    as required.
\end{proof}

Let $G=(V,E)$ be a graph. We say that $U\subset V$ is a \emph{cover} of $G$ if $U\cap e\ne \emptyset$ for all $e\in E$.  The \emph{covering number} of $G$, denoted by $\tau(G)$, is the smallest size of a cover in $G$.
The following lemma was proved, in the special case $f(G)=\eps_k(G)$, by Das, Mojallal, and Gutman in \cite{das2015energy} (see also \cite[Corollary 2.10]{lew2024partition}). Our argument here is similar to the one in \cite{das2015energy}.

Let us denote by $\mathcal{S}(V)\subset \mathcal{G}(V)$ the collection of graphs on vertex set $V$ whose edges form a star (that is, every $S\in\mathcal{S}(V)$ is a star graph plus, possibly, some isolated vertices).

\begin{lemma}\label{lemma:subadditive_cover_bound}
    Let $V$ be a finite set, and let $f:\mathcal{G}(V)\to \Rea$ be a subadditive function. Let $K\in \Rea$ such that $f(S)\le K$ for every $S\in \mathcal{S}(V)$. Then, for every $G\in \mathcal{G}(V)$,
    \[
        f(G)\le K\cdot \tau(G).
    \]
\end{lemma}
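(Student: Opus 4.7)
The plan is to decompose the edge set of $G$ into $\tau(G)$ edge-disjoint stars, one centered at each vertex of a minimum cover, and then apply subadditivity to reduce the bound on $f(G)$ to the bound already assumed on stars.

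More concretely, I would fix a cover $U = \{u_1, \ldots, u_{\tau(G)}\} \subset V$ of $G$ of minimum size. Since every edge of $G$ meets $U$, I would assign each edge $e \in E$ to some choice of endpoint of $e$ lying in $U$; call $E_i$ the set of edges assigned to $u_i$, and let $S_i = (V, E_i)$. By construction, the edges of $S_i$ all contain the common vertex $u_i$, hence form a star, so $S_i \in \mathcal{S}(V)$; moreover the $S_i$ are pairwise edge-disjoint with $G = S_1 \cup \cdots \cup S_{\tau(G)}$. Iterating the subadditivity hypothesis (edge-disjointness is preserved after unioning) then gives
\[
    f(G) \;\le\; \sum_{i=1}^{\tau(G)} f(S_i) \;\le\; K\cdot \tau(G),
\]
as required.

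There is essentially no obstacle here beyond a minor bookkeeping point: if $\mathcal{S}(V)$ is read strictly as requiring at least one edge, I need every block $E_i$ to be non-empty. This is easy to arrange using the minimality of $U$: if some $u_i$ covers no edge uniquely, i.e.\ every edge incident to $u_i$ has its other endpoint in $U$, then $U \setminus \{u_i\}$ remains a cover, contradicting the minimality of $|U|$. Hence each $u_i$ has at least one incident edge whose only cover-endpoint is $u_i$, and assigning that edge to $u_i$ in the first step guarantees $E_i \neq \emptyset$. With that adjustment the display above goes through verbatim.
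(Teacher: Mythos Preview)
Your proposal is correct and follows essentially the same route as the paper: decompose $E$ into $\tau(G)$ edge-disjoint stars centered at the vertices of a minimum cover, then apply subadditivity and the hypothesis on $\mathcal{S}(V)$. The paper fixes the assignment rule by sending each edge to its lowest-index endpoint in $U$, whereas you allow an arbitrary assignment; and your extra paragraph ensuring each $E_i\neq\emptyset$ via minimality of the cover is a bit more careful than the paper, which does not comment on this point.
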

\begin{proof}
         Let $U=\{v_1,\ldots,v_t\}$ be a cover of $G$ of size $t=\tau(G)$. 
    For $1\le i\le t$, let
    \[
        E_i = \{e\in E:\, v_i\in e, \, v_j\notin e \text{ for all } j<i\},
    \]
    and let $G_i=(V,E_i)$. Note that $G_1,\ldots,G_t$ are pairwise edge-disjoint, $G=G_1\cup\cdots\cup G_t$,  and that each $G_i$ belongs to $\mathcal{S}(V)$. Hence, $f(G_i)\le K$ for all $1\le i\le t$, and, by the subadditivity of $f$, we obtain
    \[
        f(G)\le \sum_{i=1}^t f(G_i)\le K\cdot \tau(G),
    \]
    as wanted.
\end{proof}

 Recall that for a graph $G=(V,E)$, its \emph{matching number}, denoted by $\nu(G)$, is the maximum size of a matching in $G$.  For $t\ge 0$, let $\mathcal{M}_t(V)\subset \mathcal{G}(V)$ be the collection of graphs on vertex set $V$ whose edge set consists of $t$ pairwise disjoint edges (that is, every $M\in \mathcal{M}_t(G)$ consists of a matching of size $t$ plus, possibly, some isolated vertices).

\begin{proposition}\label{prop:star_bound}
Let $V$ be a finite set, and let $f:\mathcal{G}(V)\to \Rea$ be a subadditive function. Let $K\in\Rea$ such that $f(S)\le K$ for every $S\in \mathcal{S}(V)$. In addition, let $t\ge 0$ such that $f(M)\le 0$ for all $M\in \mathcal{M}_t(V)$. Then, for every $G\in \mathcal{G}(V)$,
\[
    f(G)\le 2(t-1)K.
\]
Moreover, if $G$ is bipartite, then
\[
    f(G)\le (t-1)K.
\]
\end{proposition}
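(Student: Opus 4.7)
The plan is to combine Lemma \ref{lemma:good_subgraph_subadditive} and Lemma \ref{lemma:subadditive_cover_bound} with the classical inequality relating covering and matching numbers. First, I would invoke Lemma \ref{lemma:good_subgraph_subadditive} to replace $G$ by a spanning subgraph $G'$ satisfying $f(G')\ge f(G)$ and $f(H)>0$ for every non-empty spanning subgraph $H$ of $G'$. The point of this reduction is that the positivity condition on every non-empty spanning subgraph of $G'$ is exactly what will prevent $G'$ from containing a matching of size $t$.

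Next, I would argue that $\nu(G')\le t-1$. If instead $G'$ contained a matching $M$ of size exactly $t$, then the graph $H:=(V,M)$ would be a non-empty spanning subgraph of $G'$ lying in $\mathcal{M}_t(V)$. The hypothesis on $f$ then gives $f(H)\le 0$, while the property of $G'$ gives $f(H)>0$, a contradiction. Combined with the standard bound $\tau(G')\le 2\nu(G')$, proved by taking the $2\nu(G')$ endpoints of a maximum matching of $G'$ (which must form a vertex cover by maximality), this yields $\tau(G')\le 2(t-1)$. An application of Lemma \ref{lemma:subadditive_cover_bound} then produces
\[
    f(G)\le f(G')\le K\cdot \tau(G')\le 2(t-1)K,
\]
establishing the first bound.

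For the bipartite case, $G'$ is also bipartite as a subgraph of $G$, so by K\"onig's theorem $\tau(G')=\nu(G')\le t-1$. A second application of Lemma \ref{lemma:subadditive_cover_bound} then yields $f(G)\le (t-1)K$.

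I do not foresee a real obstacle: the proof is a clean assembly of the two previously established lemmas on subadditive functions with the classical matching-cover inequality (and K\"onig's theorem in the bipartite case). The only minor point to verify is that the matching subgraph $H=(V,M)$ extracted from $G'$ is indeed non-empty, which holds as soon as $t\ge 1$; the $t=0$ case is handled in a trivial or vacuous manner depending on conventions and is not needed for the intended applications.
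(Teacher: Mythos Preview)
Your proposal is correct and follows essentially the same route as the paper: reduce via Lemma~\ref{lemma:good_subgraph_subadditive} to a subgraph $G'$ all of whose non-empty spanning subgraphs have positive $f$-value, deduce $\nu(G')\le t-1$ from the matching hypothesis, and then apply Lemma~\ref{lemma:subadditive_cover_bound} together with $\tau\le 2\nu$ (or K\"onig's theorem in the bipartite case). Your added remarks on why $\tau(G')\le 2\nu(G')$ and on the degenerate case $t=0$ are harmless elaborations of the same argument.
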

\begin{proof}
    Let $G=(V,E)$. By Lemma \ref{lemma:good_subgraph_subadditive}, there exists a subgraph $G'=(V,E')$ of $G$ with $f(G)\le f(G')$, such that $f(H)>0$ for every spanning subgraph $H$ of $G'$. In particular, since $f(M)\le 0$ for every $M\in \mathcal{M}_t(V)$, $G'$ does not contain a matching of size $t$. In other words, $\nu(G')\le t-1$. Since $\tau(G')\le 2\nu(G')$, we obtain, by Lemma \ref{lemma:subadditive_cover_bound},
    \[
        f(G)\le f(G')\le  \tau(G')\cdot K \le 2(t-1)K.
    \]
    Similarly, if $G$ is bipartite, then $G'$ is bipartite as well, and so $\tau(G')=\nu(G')$ by K\"onig's theorem (see, for example, \cite[Chapter 5]{vanlint2001course}). Therefore,
    \[
        f(G)\le f(G')\le  \tau(G')\cdot K\le (t-1)K.
    \]
\end{proof}

\begin{remark}
    The proof of Proposition \ref{prop:star_bound} is similar to arguments appearing in \cite{lew2024partition} in the special case $f(G)=\eps_k(G)$. The idea of reducing the problem of bounding $f(G)$ to the case of graphs with bounded matching number was first applied by Haemers, Mohammadian, and Tayfeh-Rezaie \cite{haemers2010onthesum} in the special case $f(G)=\eps_2(G)$.
\end{remark}

\section{The matrix $M_k(G)$}\label{sec:M_k}

For $k\ge 1$ and $G=(V,E)$ with $|V|\ge k$, we define
\[
    M_k(G) = L(G)^{[k]}-D_k(G),
\]
where $D_k(G)\in \Rea^{\binom{|V|}{k}\times \binom{|V|}{k}}$ is a diagonal matrix defined by
\[
    D_k(G)_{\sigma,\sigma}= |E_G(\sigma)|=|\{e\in E:\, e\subset \sigma\}|
\]
for all $\sigma\in \binom{V}{k}$. We define
\[
    \teps_k(G) = \lambda_1(M_k(G))-|E|.
\]
For convenience, we denote $\teps_0(G)=-|E|$. We call $\teps_k(G)$ the \emph{$M_k$-excess} of $G$.

Fix an arbitrary linear order $<$ on $V$. For $\sigma,\tau\in\binom{V}{k}$ with $|\sigma\cap \tau|=k-1$, let $\text{sign}(\sigma,\tau)=(-1)^{|\{w\in \sigma\cap \tau:\, u<w<v\}|}$, where $u<v$ are the two unique vertices in $\sigma \triangle \tau$. 

\begin{lemma}\label{lemma:M_k_formula}
    Let $G=(V,E)$ and $1\le k\le |V|$. Then, for $\sigma,\tau\in \binom{V}{k}$,
    \[
        M_k(G)_{\sigma,\tau}=\begin{cases}
            |\{e\in E:\, e\cap \sigma\ne \emptyset\}|  & \text{if } \sigma=\tau,\\
            -\text{sign}(\sigma,\tau) & \text{if } \sigma\triangle\tau\in E,\\
            0 & \text{otherwise.}
        \end{cases}
    \]
\end{lemma}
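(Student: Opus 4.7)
The plan is simply to unpack the three definitions involved, $M_k(G) = L(G)^{[k]} - D_k(G)$, the additive compound (Equation (\ref{eq:additive_compound})), and the Laplacian (Equation (\ref{eq:laplacian})), and verify the three cases displayed in the lemma by direct computation. Since $D_k(G)$ is supported on the diagonal, only the diagonal case requires any genuine argument; the off-diagonal entries are simply read off from $L(G)^{[k]}$.

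For the diagonal, I would fix $\sigma \in \binom{V}{k}$ and compute
\[
    M_k(G)_{\sigma,\sigma} = L(G)^{[k]}_{\sigma,\sigma} - D_k(G)_{\sigma,\sigma} = \sum_{i\in\sigma}\deg(i) - |E_G(\sigma)|.
\]
The key observation, which I would isolate as a one-line counting identity, is that $\sum_{i \in \sigma}\deg(i)$ counts every edge meeting $\sigma$ once for each endpoint inside $\sigma$, so
\[
    \sum_{i\in\sigma}\deg(i) = |\{e \in E : |e\cap \sigma|=1\}| + 2|E_G(\sigma)| = |\{e \in E : e\cap\sigma \ne \emptyset\}| + |E_G(\sigma)|.
\]
Substituting yields the desired expression $|\{e\in E:\, e\cap\sigma \ne \emptyset\}|$.

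For $\sigma \ne \tau$, I would split into $|\sigma\cap\tau|=k-1$ and $|\sigma\cap\tau|\le k-2$. In the latter case, the additive compound vanishes by definition, so $M_k(G)_{\sigma,\tau}=0$. In the former case, write $\sigma\triangle\tau=\{u,v\}$; then the additive-compound formula gives $M_k(G)_{\sigma,\tau} = \text{sign}(\sigma,\tau)\cdot L(G)_{u,v}$, which equals $-\text{sign}(\sigma,\tau)$ when $\{u,v\}\in E$ and $0$ otherwise. Here I would briefly note that the two occurrences of $\text{sign}(\sigma,\tau)$ (the one from Section \ref{sec:compounds} and the one defined just before the lemma) are literally the same function of $\sigma$ and $\tau$, once the arbitrary linear order on $V$ is fixed, so no sign ambiguity arises from the symmetry $L(G)_{u,v}=L(G)_{v,u}$.

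There is no real obstacle here: the lemma is a direct bookkeeping statement packaging the structure of $M_k(G)$ in a form convenient for later use. The only point that deserves a moment of care is the counting identity on the diagonal, and confirming that the sign conventions in the two relevant definitions agree.
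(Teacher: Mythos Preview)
Your proposal is correct and essentially identical to the paper's own proof: both compute the off-diagonal entries directly from the additive-compound formula, and handle the diagonal via the same counting identity $\sum_{v\in\sigma}\deg(v) = |\{e:|e\cap\sigma|=1\}| + 2|E_G(\sigma)|$. Your extra remark about the two sign conventions coinciding is a harmless clarification that the paper leaves implicit.
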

\begin{proof}
  By \eqref{eq:laplacian} and \eqref{eq:additive_compound}, we have
  \[
    L(G)^{[k]}_{\sigma,\tau}=\begin{cases}
        \sum_{v\in \sigma} \deg(v) & \text{if } \sigma=\tau,\\
        -\text{sign}(\sigma,\tau) & \text{if } \sigma\triangle \tau\in E,\\
        0 & \text{otherwise,}
    \end{cases}
  \]
  for all $\sigma,\tau\in \binom{V}{k}$. 
  The claim then follows by noting that $M_k(G)_{\sigma,\tau}=L(G)^{[k]}_{\sigma,\tau}$ for all $\sigma\ne \tau$, and that for every $\sigma\in \binom{V}{k}$,
  \[
    \sum_{v\in \sigma} \deg(v)= |\{e\in E:\, |e\cap \sigma|=1\}|+2 |E_G(\sigma)|,
  \]
  and so
  \[
   M_k(G)_{\sigma,\sigma}=\sum_{v\in \sigma} \deg(v)- |E_G(\sigma)|= |\{e\in E:\, |e\cap \sigma|=1\}|+ |E_G(\sigma)| = |\{e\in E:\, e\cap\sigma\ne \emptyset\}|.
  \]
\end{proof}

\begin{lemma}\label{lemma:from_Mk_to_Brouwer}
    Let $G=(V,E)$ be a graph and let $1\le k\le |V|$. Then,
    \[
    \eps_k(G)\le \teps_k(G) + \max_{\sigma\in \binom{V}{k}} |E_G(\sigma)|.
    \]
\end{lemma}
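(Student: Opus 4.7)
The plan is to apply Weyl's eigenvalue inequality (Lemma \ref{lemma:weyl}) to the decomposition $L(G)^{[k]} = M_k(G) + D_k(G)$, which is simply the definition of $M_k(G)$ rewritten. All the pieces are already in place.

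First I would observe, via Lemma \ref{lemma:additive_compound}, that the eigenvalues of $L(G)^{[k]}$ are the $k$-fold sums of distinct eigenvalues of $L(G)$. Since all eigenvalues of $L(G)$ are nonnegative, the largest such sum is obtained by picking the $k$ largest eigenvalues, giving
\[
    \lambda_1\bigl(L(G)^{[k]}\bigr) = \sum_{i=1}^k \lambda_i(L(G)).
\]
In particular, $\eps_k(G) = \lambda_1(L(G)^{[k]}) - |E|$.

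Next, since $D_k(G)$ is diagonal, its largest eigenvalue is just its largest diagonal entry, which by definition is $\max_{\sigma\in\binom{V}{k}} |E_G(\sigma)|$. Applying Weyl's inequality to the symmetric matrices $M_k(G)$ and $D_k(G)$ yields
\[
    \lambda_1\bigl(L(G)^{[k]}\bigr) = \lambda_1\bigl(M_k(G) + D_k(G)\bigr) \le \lambda_1(M_k(G)) + \max_{\sigma\in\binom{V}{k}} |E_G(\sigma)|.
\]
Subtracting $|E|$ from both sides gives exactly $\eps_k(G) \le \teps_k(G) + \max_{\sigma\in\binom{V}{k}}|E_G(\sigma)|$, as required.

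There is no real obstacle here; the lemma is a direct bookkeeping consequence of the definition of $M_k(G)$ together with two standard facts (the spectrum of an additive compound and Weyl's inequality). The only thing worth double-checking is that $L(G)^{[k]}$ and $M_k(G)$ are genuinely symmetric so that Weyl applies, which is immediate from Lemma \ref{lemma:M_k_formula}: the off-diagonal entries satisfy $M_k(G)_{\sigma,\tau} = M_k(G)_{\tau,\sigma}$ because $\mathrm{sign}(\sigma,\tau) = \mathrm{sign}(\tau,\sigma)$ (the two entries in the symmetric difference are the same pair regardless of order).
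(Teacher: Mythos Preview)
Your proof is correct and follows essentially the same approach as the paper: decompose $L(G)^{[k]}=M_k(G)+D_k(G)$, identify $\lambda_1(L(G)^{[k]})=\sum_{i=1}^k\lambda_i(L(G))$ via Lemma~\ref{lemma:additive_compound}, apply Weyl's inequality, and subtract $|E|$. The only difference is that you add a few extra justifications (nonnegativity of Laplacian eigenvalues, symmetry of $M_k(G)$) that the paper leaves implicit; note that the nonnegativity remark is actually unnecessary, since the largest $k$-fold sum of distinct eigenvalues is always the sum of the $k$ largest regardless of sign.
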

\begin{proof}

By Lemma \ref{lemma:additive_compound},
\[
    \eps_k(G)= \lambda_{1}(L(G)^{[k]})-|E|.
\]
By Lemma \ref{lemma:weyl},
\[
    \lambda_1(L(G)^{[k]})\le \lambda_1(M_k(G))+\lambda_1(D_k(G))
    =\teps_k(G)+|E|+\max_{\sigma\in \binom{V}{k}} |E_G(\sigma)|.
\]
So,
\[
 \eps_k(G)\le \teps_k(G) + \max_{\sigma\in \binom{V}{k}} |E_G(\sigma)|.
\]

\end{proof}

\begin{lemma}\label{lemma:M_k_excess_subadditive}
    Let $k\ge 1$, and let $V$ be a finite set with $|V|\ge k$. Then, the function $\teps_k(G)$ is subadditive on the collection of graphs with vertex set $V$.
\end{lemma}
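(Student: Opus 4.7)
The plan is to reduce subadditivity of $\teps_k$ to Weyl's inequality (Lemma \ref{lemma:weyl}) by showing that the matrix $M_k(G)$ is itself \emph{additive} with respect to edge-disjoint unions. Concretely, I will take two edge-disjoint graphs $G_1=(V,E_1)$ and $G_2=(V,E_2)$ on the same vertex set, let $G=G_1\cup G_2$, and establish the matrix identity
\[
    M_k(G) = M_k(G_1) + M_k(G_2).
\]

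To prove this identity, I would argue in two pieces. First, the ordinary Laplacian is additive on edge-disjoint unions, $L(G) = L(G_1) + L(G_2)$, directly from the definition \eqref{eq:laplacian}; and the $k$-th additive compound is linear (as noted in Section \ref{sec:compounds}, $(A+B)^{[k]} = A^{[k]} + B^{[k]}$), so $L(G)^{[k]} = L(G_1)^{[k]} + L(G_2)^{[k]}$. Second, the diagonal matrix $D_k$ is additive on edge-disjoint unions: for any $\sigma\in\binom{V}{k}$, edge-disjointness of $G_1$ and $G_2$ gives
\[
    |E_G(\sigma)| = |E_{G_1}(\sigma)| + |E_{G_2}(\sigma)|,
\]
so $D_k(G) = D_k(G_1) + D_k(G_2)$. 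Subtracting, we get the desired identity for $M_k$.

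Once the identity is in hand, Weyl's inequality gives
\[
    \lambda_1(M_k(G)) \le \lambda_1(M_k(G_1)) + \lambda_1(M_k(G_2)).
\]
Finally, since $G_1$ and $G_2$ are edge-disjoint, $|E(G)| = |E_1| + |E_2|$, so subtracting $|E(G)|$ from the left and $|E_1|+|E_2|$ from the right yields
\[
    \teps_k(G) \le \teps_k(G_1) + \teps_k(G_2),
\]
which is exactly subadditivity.

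There is no real obstacle here; the only thing to be careful about is confirming that both summands of $M_k$ behave correctly under edge-disjoint unions. The additivity of $L^{[k]}$ is already recorded in Section \ref{sec:compounds}, and the additivity of $D_k$ is immediate from the fact that counting edges contained in $\sigma$ is a set-cardinality function applied to disjoint sets. Everything else is a direct application of Lemma \ref{lemma:weyl}.
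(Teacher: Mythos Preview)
Your proposal is correct and follows essentially the same route as the paper's proof: establish the matrix identity $M_k(G)=M_k(G_1)+M_k(G_2)$ for edge-disjoint unions, apply Weyl's inequality, and subtract the additive edge count. The only cosmetic difference is that the paper verifies the matrix identity by invoking Lemma~\ref{lemma:M_k_formula} directly, whereas you decompose it into the additivity of $L(G)^{[k]}$ and $D_k(G)$ separately; both arguments are equally valid and equally short.
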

\begin{proof}
    Let $G_1=(V,E_1)$ and $G_2=(V,E_2)$, where $E_1\cap E_2=\emptyset$. Let $G=G_1\cup G_2$. It is easy to verify, using Lemma \ref{lemma:M_k_formula}, that
    \[
        M_k(G)= M_k(G_1)+M_k(G_2).
    \]
    So, by Lemma \ref{lemma:weyl},
    \[
        \lambda_1(M_k(G))\le \lambda_1(M_k(G_1))+\lambda_1(M_k(G_2)).
    \]
    Therefore, since $|E|=|E_1|+|E_2|$, we obtain
    \[
    \teps_k(G)\le \teps_k(G_1)+\teps_k(G_2).
    \]
\end{proof}

Let $n,m\ge 1$. For matrices $A\in \Rea^{n\times n}$ and $B\in \Rea^{m\times m}$, the \emph{Kronecker product} of $A$ and $B$ is the matrix $A\otimes B\in \Rea^{nm\times nm}$ defined as an $n\times n$ block matrix whose $(i,j)$-th block is the matrix $A_{ij} B\in \Rea^{m\times m}$, for all $1\le i,j\le n$. The \emph{Kronecker sum} of $A$ and $B$ is the matrix $A\oplus B = A\otimes I_{m}+I_{n}\otimes B$, where $I_n,I_m$ are the $n\times n$ and $m\times m$ identity matrices, respectively. It is well-known (see \cite[Theorem 4.4.5]{horn1991topics}) that the eigenvalues of $A\oplus B$ are exactly $\lambda_i(A)+\lambda_j(B)$, for $1\le i\le n$ and $1\le j\le m$.

\begin{lemma}\label{lemma:vertex_disjoint}
    Let $G_1=(V_1,E_1)$ and $G_2=(V_2,E_2)$ be graphs on disjoint vertex sets. Then, for every $1\le k\le |V_1|+|V_2|$,
    \[
        \teps_k(G_1\cup G_2) = \max\left\{\teps_i(G_1)+\teps_j(G_2) :\, 0\le i\le |V_1|,\, 0\le j\le |V_2|,\, i+j=k\right\}.
    \]
\end{lemma}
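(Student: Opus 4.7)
The plan is to exhibit $M_k(G_1\cup G_2)$ as a block diagonal matrix whose blocks are precisely the Kronecker sums $M_i(G_1)\oplus M_j(G_2)$ for $i+j=k$, and then apply the standard fact that the spectrum of a Kronecker sum is the set of pairwise sums of eigenvalues.

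First I would choose the linear order on $V_1\cup V_2$ so that every vertex of $V_1$ precedes every vertex of $V_2$. (Different choices of order only conjugate $M_k$ by a diagonal $\pm1$ matrix and so leave its spectrum unchanged, so we are free to pick a convenient one.) Using the disjointness of $V_1,V_2$, every $\sigma\in\binom{V_1\cup V_2}{k}$ splits uniquely as $\sigma=\sigma_1\cup\sigma_2$ with $\sigma_1\in\binom{V_1}{i}$, $\sigma_2\in\binom{V_2}{j}$, and $i+j=k$; this groups the index set $\binom{V_1\cup V_2}{k}$ into blocks $B_{i,j}=\binom{V_1}{i}\times\binom{V_2}{j}$.

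Next I would verify, via Lemma \ref{lemma:M_k_formula}, that $M_k(G_1\cup G_2)$ is block diagonal with respect to this partition: its off-diagonal entry at $(\sigma,\tau)$ is nonzero only when $\sigma\triangle\tau\in E_1\cup E_2$, and since $E_1\subset\binom{V_1}{2}$ and $E_2\subset\binom{V_2}{2}$, such an edge forces $(|\sigma_1|,|\sigma_2|)=(|\tau_1|,|\tau_2|)$, so rows and columns of distinct blocks do not interact. For the $(i,j)$-block itself, the diagonal entry at $(\sigma_1,\sigma_2)$ counts edges of $G_1\cup G_2$ meeting $\sigma_1\cup\sigma_2$, which, again by the disjointness of $V_1,V_2$, splits as $M_i(G_1)_{\sigma_1,\sigma_1}+M_j(G_2)_{\sigma_2,\sigma_2}$. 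For off-diagonal entries the key point is the sign: by the choice of order, if $\sigma\triangle\tau=\{u,v\}\subset V_1$ then no vertex of $\sigma_2\subset V_2$ lies between $u$ and $v$, so $\mathrm{sign}(\sigma,\tau)=\mathrm{sign}(\sigma_1,\tau_1)$ computed inside $V_1$; the symmetric statement holds for edges in $E_2$. Therefore, with the convention $M_0(G):=[0]\in\Rea^{1\times 1}$ (consistent with $\teps_0(G)=-|E|$), the $(i,j)$-block equals exactly $M_i(G_1)\otimes I+I\otimes M_j(G_2)=M_i(G_1)\oplus M_j(G_2)$.

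Finally, using the well-known fact that the eigenvalues of $A\oplus B$ are the pairwise sums $\lambda_p(A)+\lambda_q(B)$, the largest eigenvalue of the $(i,j)$-block is $\lambda_1(M_i(G_1))+\lambda_1(M_j(G_2))$, and maximizing over all $(i,j)$ with $i+j=k$ gives $\lambda_1(M_k(G_1\cup G_2))$. Subtracting $|E_1|+|E_2|=|E(G_1\cup G_2)|$ from both sides yields the claimed identity for $\teps_k$.

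The only real obstacle is the bookkeeping of the signs $\mathrm{sign}(\sigma,\tau)$ so that each block genuinely equals the Kronecker sum and not merely a signed version of it; this is the reason for insisting on the particular ordering of $V_1\cup V_2$ at the outset.
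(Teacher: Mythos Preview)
Your proposal is correct and follows essentially the same approach as the paper: order $V_1$ before $V_2$, observe that $M_k(G_1\cup G_2)$ is block diagonal with blocks $M_i(G_1)\oplus M_j(G_2)$, and apply the Kronecker sum eigenvalue formula. If anything, you are more explicit than the paper about why the signs behave correctly under the chosen ordering, whereas the paper dismisses this step with ``it is easy to verify.''
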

\begin{proof}

    Fix arbitrary orders on $V_1$ and $V_2$, and let $<$ be a common extension of these orders to $V_1\cup V_2$ such that all vertices in $V_1$ precede all the vertices in $V_2$. 

    Let $G=G_1\cup G_2$, and let $V=V_1\cup V_2$ and $E=E_1\cup E_2$. Let us choose an order on $\binom{V}{k}$, first by ordering the sets $\sigma\in \binom{V}{k}$ increasingly according to $|\sigma\cap V_1|$, then, within each class $\{\sigma:\, |\sigma\cap V_1|=i\}$, according to the lexicographic order on the subsets $\sigma\cap V_1$ induced by the given order on $V_1$, and finally, within each class $\{\sigma:\, \sigma\cap V_1=\eta\}$ according to the lexicographic order on the subsets $\sigma\cap V_2$ induced by the given order on $V_2.$
    
    Note that for $\sigma,\tau\in \binom{V}{k}$, we have $\sigma\triangle \tau\in E$ if and only if $\sigma\cap V_i=\tau\cap V_i$ and $(\sigma\cap V_j)\triangle (\tau\cap V_j)\in E_j$, for $i=1$ and $j=2$, or $i=2$ and $j=1$. Therefore, by Lemma \ref{lemma:M_k_formula}, the matrix $M_k(G)$ is a block diagonal matrix with blocks $M_{ij}\in \Rea^{m_{ij}\times m_{ij}}$, where $m_{ij}=\binom{|V_1|}{i}\binom{|V_2|}{j}$, and the rows and columns of each matrix $M_{ij}$ are indexed by the sets $\sigma\in \binom{V}{k}$ with $|\sigma\cap V_1|=i$ and $|\sigma\cap V_2|=j$, for every $0\le i\le |V_i|$ and $0\le j\le |V_j|$ with $i+j=k$.  Moreover, it is easy to verify, using Lemma \ref{lemma:M_k_formula}, that $M_{ij}= M_i(G_1)\oplus M_j(G_2)$ (where, for convenience, we define $M_0(G_1)$ and $M_0(G_2)$ to be the $1\times 1$ zero matrix). Hence,
    \[
        \lambda_1(M_k(G))= \max\{ \lambda_1(M_i(G_1))+\lambda_1(M_j(G_2)):\, 0\le i\le |V_1|,\, 0\le j\le |V_2|,\, i+j=k\}.
    \]
    Since $|E|=|E_1|+|E_2|$, we obtain
    \[
        \teps_k(G) = \max\left\{\teps_i(G_1)+\teps_j(G_2) :\, 0\le i\le |V_1|,\, 0\le j\le |V_2|,\, i+j=k\right\},
    \]
    as wanted.
\end{proof}

As a consequence, we obtain the following useful result. 
\begin{corollary}\label{corollary:M_k_isolated}
    Let $G=(V,E)$ be a graph, and let $G'$ be obtained from $G$ by adding $t$ isolated vertices. Then, for $1\le k\le |V|+t$,
    \[
    \teps_k(G')= \max_{\max\{0,k-t\}\le i\le \min\{k,|V|\}} \teps_i(G).
    \]
\end{corollary}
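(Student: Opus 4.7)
The plan is to deduce this directly from Lemma \ref{lemma:vertex_disjoint} applied to the decomposition $G' = G \cup H$, where $H = (W, \emptyset)$ is the edgeless graph on the $t$ added isolated vertices (so that $V(G)$ and $W$ are disjoint and $G'$ is their vertex-disjoint union).

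The first step is to compute $\teps_j(H)$ for every $0 \le j \le t$. Since $H$ has no edges, $L(H) = 0$, and hence $L(H)^{[k]} = 0$ for every $k$ by the definition of the additive compound in \eqref{eq:additive_compound}. Similarly, $D_j(H) = 0$, so $M_j(H) = 0$. Together with $|E(H)| = 0$ and the convention $\teps_0(H) = -|E(H)| = 0$, this gives $\teps_j(H) = 0$ for all $0 \le j \le t$.

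Plugging this into Lemma \ref{lemma:vertex_disjoint} yields
\[
\teps_k(G') = \max\bigl\{ \teps_i(G) + \teps_j(H) \,:\, 0 \le i \le |V|,\ 0 \le j \le t,\ i+j = k \bigr\} = \max\bigl\{ \teps_i(G) \,:\, 0 \le i \le |V|,\ 0 \le k-i \le t \bigr\}.
\]
Rewriting the constraints $0 \le i \le |V|$ and $0 \le k - i \le t$ as $\max\{0, k-t\} \le i \le \min\{k, |V|\}$ gives the claimed formula.

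There is no real obstacle here: the argument is essentially a one-line specialization of Lemma \ref{lemma:vertex_disjoint}, with the only verification being the trivial computation $\teps_j(H) = 0$ on the empty graph and a rewriting of the index range.
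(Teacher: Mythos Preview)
Your proof is correct and follows exactly the same approach as the paper: decompose $G'$ as the vertex-disjoint union of $G$ with the edgeless graph on $t$ vertices, observe that the latter has $\teps_j = 0$ for all $0 \le j \le t$, and apply Lemma \ref{lemma:vertex_disjoint}. The only difference is cosmetic (you spell out the rewriting of the index range slightly more explicitly).
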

\begin{proof}
Let $U$ be a set of size $t$ with $U\cap V=\emptyset$, and let $G_1=(U,\emptyset)$ be the graph consisting of $t$ isolated vertices. It is easy to check that $M_j(G_1)=0$, and so $\teps_j(G_1)=0$, for all $1\le j\le t$. For $j=0$, we have $\teps_0(G_1)=0$ as well, by definition. Since $G'=G\cup G_1$, we obtain, by Lemma \ref{lemma:vertex_disjoint},
    \[
    \teps_k(G')= \max_{\max\{0,k-t\}\le i\le \min\{k,|V|\}} \teps_i(G).
    \] 
\end{proof}

\section{The $M_k$-excess of star graphs and matchings}\label{sec:M_k_stars_matchings}

Let $S_n$ be the star graph on $n$ vertices. That is, it is the graph with vertex set $[n]$ and edge set $\{\{1,i\}:\, 2\le i\le n\}$.

\begin{lemma}\label{lemma:M_k_excess_for_stars}
    Let $n\ge 2$ and $1\le k\le n$. Then,
    \[
        \teps_k(S_n)\le \sqrt{k}.
    \]
\end{lemma}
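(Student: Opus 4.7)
The plan is to exploit the block structure of $M_k(S_n)$ induced by the center vertex. Write $1$ for the center of $S_n$ and partition $\binom{[n]}{k}=A\cupdot B$ with $A=\{\sigma:1\in\sigma\}$ and $B=\{\sigma:1\notin\sigma\}$. From Lemma~\ref{lemma:M_k_formula}, $M_k(S_n)_{\sigma,\sigma}=n-1$ for $\sigma\in A$ (every edge of $S_n$ meets $\sigma$ through $1$) and $M_k(S_n)_{\sigma,\sigma}=k$ for $\sigma\in B$ (only the $k$ edges $\{1,v\}$ with $v\in\sigma$ meet $\sigma$); off-diagonal entries are nonzero only when $\sigma\triangle\tau=\{1,v\}\in E$, which forces $\sigma$ and $\tau$ to lie on opposite sides of the partition. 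Hence, with respect to the block structure indexed by $A\cupdot B$,
\[
M_k(S_n)=\begin{pmatrix}(n-1)I_A & -X\\ -X^T & kI_B\end{pmatrix}
\]
for a suitable $\{-1,0,1\}$-matrix $X\in\Rea^{A\times B}$.

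From this block form, an eigenvector $(u,w)$ with eigenvalue $\lambda\ne n-1$ satisfies $X^T X w=(n-1-\lambda)(k-\lambda)\,w$, so every such $\lambda$ has the form $\lambda=\tfrac{1}{2}\bigl(n-1+k\pm\sqrt{(n-1-k)^2+4\nu}\bigr)$ for some eigenvalue $\nu\ge 0$ of $X^TX$. Maximising over $\nu\in[0,\|X\|_{\text{op}}^2]$ gives
\[
\lambda_1(M_k(S_n))\le\tfrac{1}{2}\Bigl(n-1+k+\sqrt{(n-1-k)^2+4\|X\|_{\text{op}}^2}\Bigr).
\]

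The key step is the bound $\|X\|_{\text{op}}^2\le n-1$. After reindexing $A\cong\binom{\{2,\dots,n\}}{k-1}$ via $\sigma\mapsto\sigma\setminus\{1\}$ and $B\cong\binom{\{2,\dots,n\}}{k}$ via $\tau\mapsto\tau$, and using the natural order on $[n]$ in $\text{sign}$, a short computation gives $X_{S,T}=(-1)^{j-1}$ whenever $T=S\cup\{t_j\}$ with $t_j$ the $j$-th smallest element of $T$, and $X_{S,T}=0$ otherwise. That is, $X$ is exactly the simplicial boundary operator $\Rea^{\binom{\{2,\dots,n\}}{k}}\to\Rea^{\binom{\{2,\dots,n\}}{k-1}}$ on the full $(n-2)$-simplex on $\{2,\dots,n\}$. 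The classical identity
\[
XX^T+Y^TY=(n-1)\,I_A,
\]
where $Y$ is the analogous boundary $\Rea^{\binom{\{2,\dots,n\}}{k-1}}\to\Rea^{\binom{\{2,\dots,n\}}{k-2}}$ (with the convention $Y^TY:=0$ for $k=1$), can be verified directly: diagonals yield $(n-k)+(k-1)=n-1$; off-diagonal entries with $|S\cap S'|=k-2$ cancel due to sign bookkeeping; and off-diagonal entries with $|S\cap S'|<k-2$ vanish trivially in both summands. Since $Y^TY\succeq 0$, this yields $\|X\|_{\text{op}}^2\le n-1$.

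Finally, the case $k=n$ is immediate because then $B=\emptyset$ and $\teps_n(S_n)=0$. Otherwise, setting $a=n-1-k\ge 0$ and subtracting $|E|=n-1$ from the bound on $\lambda_1(M_k(S_n))$ yields $\teps_k(S_n)\le\tfrac{1}{2}\bigl(-a+\sqrt{a^2+4a+4k}\bigr)$, and $\teps_k(S_n)\le\sqrt{k}$ is equivalent (after squaring) to $4a\le 4a\sqrt{k}$, valid for $k\ge 1$. The hardest part of executing this plan is verifying the identity $XX^T+Y^TY=(n-1)I_A$, particularly the sign-cancellation in the off-diagonal case $|S\cap S'|=k-2$: this is routine but requires careful bookkeeping of the contributions from both boundary operators.
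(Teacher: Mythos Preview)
Your argument is correct and yields the same exact bound as the paper, but the route is genuinely different. The paper makes one clean structural observation: the diagonal perturbation $D_k(S_n)$ is itself a $k$-th additive compound, namely $D_k(S_n)=D^{[k]}$ for the $n\times n$ diagonal matrix $D$ with $D_{1,1}=k-1$ and zeros elsewhere. By additivity this gives $M_k(S_n)=(L(S_n)-D)^{[k]}$, so $\lambda_1(M_k(S_n))=\sum_{i=1}^k\lambda_i(L(S_n)-D)$, and the problem collapses to diagonalising an explicit $n\times n$ matrix with an $(n-2)$-fold eigenvalue $1$ and two eigenvalues coming from a $2\times 2$ block. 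Your approach instead works directly at the level of the $\binom{n}{k}\times\binom{n}{k}$ matrix, exploiting the $A\cupdot B$ block structure and then invoking the classical identity $XX^T+Y^TY=(n-1)I$ for the simplicial boundary maps of the full simplex on $\{2,\dots,n\}$ to bound $\|X\|_{\mathrm{op}}^2$. What the paper's approach buys is economy: once you see $D_k(S_n)=D^{[k]}$, no sign bookkeeping or appeal to simplicial Laplacians is needed. What your approach buys is a pleasant connection to combinatorial Hodge theory and a method that does not rely on the perturbation happening to be an additive compound; in particular, your block reduction would still give information for other diagonal perturbations of $L(S_n)^{[k]}$. Two small presentational points worth tightening in your write-up: explicitly note that the potential eigenvalue $\lambda=n-1$ is already dominated by your displayed upper bound (it corresponds to $\nu=0$), and that $\lambda_{\max}(X^TX)=\lambda_{\max}(XX^T)$ so the bound $XX^T\preceq (n-1)I$ indeed controls $\|X\|_{\mathrm{op}}^2$.
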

\begin{proof}
Recall that $M_k(S_n)=L(S_n)^{[k]}-D_k(S_n)$. The matrix $D_k(S_n)$ satisfies
\[
    D_k(S_n)_{\sigma,\sigma}=\begin{cases}
        k-1 & \text{if } 1\in \sigma,\\
        0  & \text{otherwise,}
    \end{cases}
\]
for all $\sigma\in \binom{[n]}{k}$. Let $D\in \Rea^{n\times n}$ be the diagonal matrix defined by
\[
    D_{i,i}=\begin{cases}
        k-1 & \text{if } i=1,\\
        0 & \text{otherwise,}
    \end{cases}
\]
for all $1\le i\le n$. It is easy to check, by \eqref{eq:additive_compound}, that $D_k(S_n)=D^{[k]}$. By the additivity of the additive compound operator, we obtain
\[
    M_k(S_n)= (L(S_n)-D)^{[k]}.
\]
So, by Lemma \ref{lemma:additive_compound},
\[
\lambda_1(M_k(S_n))=\sum_{i=1}^k \lambda_i(L(S_n)-D).
\]
Let $M=L(S_n)-D$. Then,
\[
    M_{i,j}=\begin{cases}
        n-k & \text{if } i=j=1,\\
        1 &\text{if } i=j\ne 1,\\
        -1 & \text{if } i\ne j,\, 1\in\{i,j\},\\
        0 & \text{otherwise,}
    \end{cases}
\]
for all $1\le i,j\le n$. Let $x=\begin{pmatrix}
    y & z^T
\end{pmatrix}^T$, where $y\in \Rea$ and $z\in \Rea^{n-1}$. It is easy to check that for $y=0$ and $z\perp {\bf 1}$ (where ${\bf 1}\in \Rea^{n-1}$ is the all-ones vector), then $x$ is an eigenvector of $M$ with eigenvalue $1$. Therefore, $1$ is an eigenvalue of $M$ with multiplicity $n-2$. 

Now, assume that $y\in\Rea$ and $z= c {\bf 1}$ for some $c\in\Rea$. Then,
\[
    M x= \begin{pmatrix}
        (n-k)y - (n-1)c\\
        (-y+c) {\bf 1}
    \end{pmatrix}.
\]
Therefore, the two remaining eigenvalues of $M$ are the eigenvalues of the $2\times 2$ matrix
\[
    \begin{pmatrix}
        n-k & -n+1\\
        -1 & 1
    \end{pmatrix},
\]
which are
\[
    \lambda_{+,-}= \frac{n-k+1\pm \sqrt{(n-k+1)^2+4(k-1)}}{2}.
\]
For $k=n$, it is easy to check that $\teps_k(S_n)=0$.
For $1\le k\le n-1$, noting that $\lambda_{-}\le 0$, we have
\begin{align*}
    \teps_k(S_n)&= \frac{n-k+1+ \sqrt{(n-k+1)^2+4(k-1)}}{2}+(k-1) -(n-1) \\&= \frac{\sqrt{(n-k+1)^2+4(k-1)}-(n-k-1)}{2}= \frac{\sqrt{(n-k-1)^2+4(n-1)}-(n-k-1)}{2}
    \\&=  \frac{2(n-1)}{\sqrt{(n-k-1)^2+4(n-1)}+(n-k-1)} \\&= 2 \left(\sqrt{\left(1-\frac{k}{n-1}\right)^2+\frac{4}{n-1}}+\left(1-\frac{k}{n-1}\right)\right)^{-1}.
\end{align*}
Let $x=k/(n-1)$. It is easy to verify that, for fixed $k$, the function $\sqrt{(1-x)^2+4 x/k}+(1-x)$ is decreasing, so its minimum on the interval $[0,1]$ is obtained at $x=1$. So, for fixed $k$, the maximum value of $\teps_k(S_n)$ is obtained at $n=k+1$, and is equal to $\sqrt{n-1}=\sqrt{k}$. Hence, $\teps_k(S_n)\le \sqrt{k}$, as wanted.

\end{proof}

Recall that for a finite set $V$, we denote by $\mathcal{S}(V)$ the collection of graphs on vertex set $V$ that are isomorphic to a star graph plus (possibly) some isolated vertices. For $t\ge 0$, we denote by $\mathcal{M}_t(V)$ the collection of graphs on vertex set $V$ whose edge set consists of $t$ pairwise disjoint edges.

\begin{corollary}\label{cor:mk_star}
    Let $k\ge 1$ and $|V|\ge k$. Then, for every $S\in \mathcal{S}(V)$,
    \[
    \teps_k(S)\le \sqrt{k}.
    \]
\end{corollary}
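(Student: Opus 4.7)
The plan is to reduce the statement directly to Lemma \ref{lemma:M_k_excess_for_stars} by peeling off the isolated vertices via Corollary \ref{corollary:M_k_isolated}. By definition of $\mathcal{S}(V)$, any $S \in \mathcal{S}(V)$ is isomorphic to the disjoint union of a star graph $S_m$ (on some subset $V'\subset V$ with $|V'| = m$) and $|V|-m$ isolated vertices. I would first dispose of the degenerate case where $S$ has no edges at all: here $M_k(S)$ is the zero matrix by Lemma \ref{lemma:M_k_formula}, so $\tilde{\varepsilon}_k(S) = 0 \le \sqrt{k}$.

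In the main case $m \ge 2$, set $t = |V| - m$. By Corollary \ref{corollary:M_k_isolated}, I get
\[
    \tilde{\varepsilon}_k(S) = \max_{\max\{0,k-t\} \le i \le \min\{k,m\}} \tilde{\varepsilon}_i(S_m).
\]
For the index $i = 0$, the quantity $\tilde{\varepsilon}_0(S_m) = -|E(S_m)| = -(m-1) \le 0 \le \sqrt{k}$ by definition. For every $1 \le i \le \min\{k,m\}$, Lemma \ref{lemma:M_k_excess_for_stars} gives
\[
    \tilde{\varepsilon}_i(S_m) \le \sqrt{i} \le \sqrt{k}.
\]

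Taking the maximum over the permitted range of $i$ yields $\tilde{\varepsilon}_k(S) \le \sqrt{k}$, as required. There is no real obstacle here; the only thing to be careful about is handling the edge cases ($i=0$, and the possibility that the star component is empty) so that Lemma \ref{lemma:M_k_excess_for_stars} is only invoked in its stated range $1 \le i \le m$.
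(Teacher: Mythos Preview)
Your proof is correct and follows essentially the same approach as the paper: handle the edgeless case separately, then apply Corollary~\ref{corollary:M_k_isolated} and Lemma~\ref{lemma:M_k_excess_for_stars}. The paper's proof is just a terse two-line version of what you wrote; your explicit treatment of the $i=0$ term and the range of indices is a welcome elaboration but not a different argument.
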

\begin{proof}
    If $S$ has no edges, then $\teps_k(S)=0\le \sqrt{k}$. If $S$ has at least one edge, the claim follows from Corollary \ref{corollary:M_k_isolated} and Lemma \ref{lemma:M_k_excess_for_stars}.
\end{proof}

\begin{lemma}\label{lemma:Mk_for_matchings}
    Let $t\ge k\ge 1$ and let $V$ be a finite set with $|V|\ge 2t$. Let $G\in \mathcal{M}_t(V)$. Then,
    \[
        \teps_k(G)= 2k-t.
    \]
    In particular, if $t\ge 2k$, $\teps_k(G)\le 0$.
\end{lemma}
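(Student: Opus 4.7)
The plan is to reduce the computation of $\teps_k(G)$ to the base case of a single edge. First, I would compute $\teps_j(K_2)$ for $j \in \{0, 1, 2\}$ directly from Lemma \ref{lemma:M_k_formula}: for $j = 1$, the matrix $M_1(K_2) = L(K_2)$ has eigenvalues $0$ and $2$, giving $\teps_1(K_2) = 2 - 1 = 1$; for $j = 2$, $M_2(K_2)$ is the $1 \times 1$ matrix $(1)$, giving $\teps_2(K_2) = 1 - 1 = 0$; and $\teps_0(K_2) = -1$ by definition.

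Next, I would write $G$ as the disjoint union of a perfect matching $G_0$ on $2t$ vertices together with $|V| - 2t$ isolated vertices. Corollary \ref{corollary:M_k_isolated} then gives $\teps_k(G) = \max_{i_0 \le i \le k} \teps_i(G_0)$, where $i_0 = \max\{0, k - (|V| - 2t)\}$ (the upper bound being $k$ since $k \le t \le 2t$). Decomposing $G_0$ as a vertex-disjoint union of $t$ copies of $K_2$ and iterating Lemma \ref{lemma:vertex_disjoint}, I obtain for $0 \le i \le t$:
\[
    \teps_i(G_0) = \max\left\{\sum_{j=1}^t \teps_{k_j}(K_2) :\, k_j \in \{0, 1, 2\},\ \sum_{j=1}^t k_j = i\right\}.
\]

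Writing $a$, $b$, $c$ for the number of indices $j$ with $k_j$ equal to $1$, $2$, $0$ respectively, this reduces to maximizing $a - c = 2a + b - t$ subject to $a + 2b = i$, $a + b + c = t$, and $a, b, c \ge 0$ integers. Substituting $a = i - 2b$, the objective becomes $2i - 3b - t$, which is maximized at $b = 0$ (feasible since $i \le t$ gives $a = i$, $c = t - i$, both non-negative), yielding $\teps_i(G_0) = 2i - t$. Since this is increasing in $i$, the maximum over $i_0 \le i \le k$ is attained at $i = k$, giving $\teps_k(G) = 2k - t$. The additional claim that $\teps_k(G) \le 0$ when $t \ge 2k$ then follows immediately. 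I do not expect any serious obstacle here: once the base case for $K_2$ is computed, the rest is a bookkeeping exercise combining Corollary \ref{corollary:M_k_isolated} and Lemma \ref{lemma:vertex_disjoint}.
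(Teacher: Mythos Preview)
Your proposal is correct and follows essentially the same approach as the paper: compute $\teps_j$ on a single edge, use Lemma~\ref{lemma:vertex_disjoint} to handle the perfect matching on $2t$ vertices, and then invoke Corollary~\ref{corollary:M_k_isolated} for the isolated vertices. The only cosmetic differences are that the paper treats the $|V|=2t$ case first and phrases the optimization more tersely, whereas you reverse the order and carry out the maximization explicitly via the variables $a,b,c$.
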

\begin{proof}
    First, assume that $|V|=2t$, that is, $G$ has no isolated vertices. Let $G=G_1\cup\cdots\cup G_{t}$, where, for $1\le i\le t$, $G_i$ is a two-vertex graph consisting of one of the edges of $G$.  
    It is easy to check that
    \[
        \teps_j(G_i)=\begin{cases}
            -1 & \text{if } j=0,\\
            1 & \text{if } j=1,\\
            0 & \text{if } j=2,
        \end{cases}
    \]
    So, by Lemma \ref{lemma:vertex_disjoint}, we obtain, for $1\le k\le t$,
    \begin{align*}
        \teps_k(G) &= \max\left\{ \sum_{j=1}^t \teps_{i_j}(G_j) :\,  0\le i_1,\ldots,i_t\le 2,\, \sum_{j=1}^t i_j=k\right\}
       \\ &=
        \max\left\{ |\{j\in[t]:\, i_j= 1\}|-|\{j\in[t]:\, i_j= 0\}|     \, :0\le i_1,\ldots,i_t\le 2,\, \sum_{j=1}^t i_j= k\right\} \\&= k-(t-k)= 2k-t.
    \end{align*}
    Note also that $\teps_0(G)=-t$, by definition.
    
    Now, assume $|V|>2t$, and let $1\le k\le t$. By Corollary \ref{corollary:M_k_isolated},
    \[
        \teps_k(G) = \max_{0\le i\le k} (2i-t) =2k-t.
    \]
    In particular, if $t\ge 2k$, then $\teps_k(G)\le 0$, as required.
\end{proof}

Theorem \ref{thm:main_density_version} follows from Lemma \ref{lemma:from_Mk_to_Brouwer} and the following result. 

\begin{proposition}\label{prop:teps_bound}
    Let $G=(V,E)$ be a graph, and let $1\le k\le |V|$. Then,
    \[
        \teps_k(G)\le (4k-2)\sqrt{k}.
    \]
    Moreover, if $G$ is bipartite, then
       \[
        \teps_k(G)\le (2k-1)\sqrt{k}.
    \]
\end{proposition}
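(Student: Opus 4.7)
The plan is to apply Proposition \ref{prop:star_bound} directly to the function $f=\teps_k$, using the three ingredients that have already been established in the previous sections. The three inputs needed are: (i) subadditivity of $\teps_k$ on graphs with a common vertex set, (ii) a bound $\teps_k(S)\le K$ for all star-plus-isolated-vertices graphs $S\in\mathcal{S}(V)$, and (iii) a threshold $t$ such that $\teps_k(M)\le 0$ for every $M\in\mathcal{M}_t(V)$.

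First, Lemma \ref{lemma:M_k_excess_subadditive} already gives ingredient (i): $\teps_k$ is subadditive with respect to edge-disjoint unions of graphs on the same vertex set $V$. Second, Corollary \ref{cor:mk_star} gives ingredient (ii) with $K=\sqrt{k}$: for every $S\in\mathcal{S}(V)$ we have $\teps_k(S)\le \sqrt{k}$. Third, Lemma \ref{lemma:Mk_for_matchings} shows that $\teps_k(M)=2k-t$ for any $M\in\mathcal{M}_t(V)$ with $|V|\ge 2t$, so choosing $t=2k$ guarantees $\teps_k(M)\le 0$; for smaller ambient $V$, matchings of size $2k$ do not exist, so the hypothesis of Proposition \ref{prop:star_bound} is vacuously satisfied for this value of $t$.

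Plugging $K=\sqrt{k}$ and $t=2k$ into Proposition \ref{prop:star_bound} yields, for a general graph $G$,
\[
\teps_k(G)\le 2(t-1)K = 2(2k-1)\sqrt{k} = (4k-2)\sqrt{k},
\]
and for a bipartite $G$,
\[
\teps_k(G)\le (t-1)K = (2k-1)\sqrt{k},
\]
which are precisely the two claimed bounds.

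There is no real obstacle here; this proposition is essentially an assembly result, combining the star and matching computations through the abstract framework of subadditive functions. The only mild subtlety worth noting explicitly is that the matching hypothesis of Proposition \ref{prop:star_bound} is stated for all $M\in\mathcal{M}_t(V)$, and Lemma \ref{lemma:Mk_for_matchings} requires $|V|\ge 2t$; but if $|V|<2t$ then $\mathcal{M}_t(V)=\emptyset$, so the required implication holds trivially, and the application of Proposition \ref{prop:star_bound} goes through in all cases.
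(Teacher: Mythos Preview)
Your proof is correct and follows the paper's argument exactly: apply Proposition \ref{prop:star_bound} to $f=\teps_k$ with $K=\sqrt{k}$ and $t=2k$, using Lemma \ref{lemma:M_k_excess_subadditive}, Corollary \ref{cor:mk_star}, and Lemma \ref{lemma:Mk_for_matchings} as the three inputs. Your explicit remark that the matching hypothesis is vacuous when $|V|<4k$ is a nice clarification that the paper leaves implicit.
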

\begin{proof}
By Lemma \ref{lemma:M_k_excess_subadditive}, Corollary \ref{cor:mk_star}, Lemma \ref{lemma:Mk_for_matchings}, and Proposition \ref{prop:star_bound}, we obtain, for every graph $G=(V,E)$ and $1\le k\le |V|$, 
\[
    \teps_k(G)\le 2(2k-1)\sqrt{k}=(4k-2)\sqrt{k}.
\]
Moreover, if $G$ is bipartite, then
\[
    \teps_k(G)\le (2k-1)\sqrt{k},
\]
as wanted.
\end{proof}

\begin{remark}
Note that, if $G$ is bipartite, combining Proposition \ref{prop:teps_bound} with Lemma \ref{lemma:from_Mk_to_Brouwer}, we obtain $\eps_k(G)\le k^2/4+(2k-1)\sqrt{k}$. As $k^2/4+(2k-1)\sqrt{k}\le \binom{k+1}{2}$ for $k\ge 59$, we obtain that bipartite graphs satisfy the bound in Brouwer's conjecture for all $k\ge 59$. 
\end{remark}

As mentioned in the introduction, Theorems \ref{thm:main_brouwer} and \ref{thm:approximate_brouwer_turan} follow immediately from Theorem \ref{thm:main_density_version}, while Corollary \ref{cor:approximate_brouwer} follows easily from Theorem \ref{thm:main_brouwer}.

\section{Laplacian spectral radii of token graphs}\label{sec:token}

In this section, we prove Theorem \ref{thm:main_token}. The argument is analogous to the proof of Proposition \ref{prop:teps_bound}.

Let $G=(V,E)$ be a graph. It is easy to verify, using \eqref{eq:laplacian}, that, for $0\le k\le |V|$, we have
\[
    L(F_k(G))_{\sigma,\tau}=\begin{cases}
        |\{e\in E:\, |e\cap \sigma|=1\}| & \text{if } \sigma=\tau,\\
        -1 & \text{if } \sigma\triangle \tau\in E,\\
        0 &\text{otherwise,}
    \end{cases}
\]
for all $\sigma,\tau\in\binom{V}{k}$. Note the similarity between the structure of $L(F_k(G))$ and that of the matrices $L(G)^{[k]}$ and $M_k(G)$. 
For a graph $G=(V,E)$ and $0\le k\le |V|$, let us denote 
\[
\eps^T_k(G)=\lambda_1(L(F_k(G)))-|E|.
\]
First, we state a few simple properties of $\eps_k^T(G)$.

\begin{lemma}\label{lemma:token_subadditive}
    Let $k\ge 1$ and let $V$ be a finite set with $|V|\ge k$. Then, the function $\eps_k^T(G)$ is subadditive on the collection of graphs with vertex set $V$.
\end{lemma}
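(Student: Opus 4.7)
The plan is to mirror the argument used for Lemma \ref{lemma:M_k_excess_subadditive}: show that the matrix $L(F_k(G))$ itself decomposes additively over edge-disjoint unions, then apply Weyl's inequality.

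Concretely, I would fix edge-disjoint graphs $G_1=(V,E_1)$ and $G_2=(V,E_2)$, set $G=G_1\cup G_2$, and verify entrywise that
\[
    L(F_k(G))= L(F_k(G_1))+L(F_k(G_2)).
\]
Using the explicit formula for $L(F_k(\cdot))$ stated just above the lemma, there are three cases to check for $\sigma,\tau\in\binom{V}{k}$. If $\sigma=\tau$, the diagonal entry is $|\{e\in E:\,|e\cap\sigma|=1\}|$, and since $E_1$ and $E_2$ are disjoint, this number splits as the sum of the corresponding counts over $E_1$ and $E_2$. If $\sigma\ne\tau$ and $\sigma\triangle\tau\in E$, then $\sigma\triangle\tau$ lies in exactly one of $E_1,E_2$ (again by disjointness), so exactly one summand contributes $-1$ and the other $0$, matching the left-hand side. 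If $\sigma\ne\tau$ and $\sigma\triangle\tau\notin E$, both sides are $0$.

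Given this identity, Lemma \ref{lemma:weyl} immediately yields
\[
    \lambda_1(L(F_k(G)))\le \lambda_1(L(F_k(G_1)))+\lambda_1(L(F_k(G_2))).
\]
Combining with $|E|=|E_1|+|E_2|$ and subtracting from both sides gives $\eps_k^T(G)\le \eps_k^T(G_1)+\eps_k^T(G_2)$, which is the claimed subadditivity.

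I do not anticipate any real obstacle here; the only thing one has to be slightly careful about is the diagonal, where the decomposition relies crucially on $E_1\cap E_2=\emptyset$ so that no edge is double-counted in the degree-like quantity $|\{e\in E:\,|e\cap\sigma|=1\}|$. This is the same mechanism that made $M_k$ additive in Lemma \ref{lemma:M_k_excess_subadditive}, and in fact the argument is simpler in the token-graph case because no sign conventions enter the off-diagonal entries.
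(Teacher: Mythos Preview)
Your proposal is correct and follows exactly the approach the paper takes: the paper's proof simply notes that $L(F_k(G_1\cup G_2))=L(F_k(G_1))+L(F_k(G_2))$ for edge-disjoint $G_1,G_2$ and then refers back to the proof of Lemma~\ref{lemma:M_k_excess_subadditive}. Your entrywise verification of this identity is more detailed than what the paper writes, but the strategy is identical.
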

\begin{proof}
    The proof is identical to the proof of Lemma \ref{lemma:M_k_excess_subadditive}, after noting that $L(F_k(G_1\cup G_2))=L(F_k(G_1))+L(F_k(G_2))$ for every pair of edge-disjoint graphs $G_1=(V,E_1)$ and $G_2=(V,E_2)$.
\end{proof}

The following two results are straightforward analogues of Lemma \ref{lemma:vertex_disjoint} and Corollary \ref{corollary:M_k_isolated}, so we omit their proofs.

\begin{lemma}\label{lemma:token_component_formula}
    Let $G_1=(V_1,E_1)$ and $G_2=(V_2,E_2)$ be graphs on disjoint vertex sets. Then, for every $1\le k\le |V_1|+|V_2|$,
    \[
        \eps_k^T(G_1\cup G_2) = \max\left\{\eps_i^T(G_1)+\eps_j^T(G_2) :\, 0\le i\le |V_1|,\, 0\le j\le |V_2|,\, i+j=k\right\}.
    \]
\end{lemma}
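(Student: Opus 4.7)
The plan is to mirror the argument used for Lemma \ref{lemma:vertex_disjoint}, replacing $M_k$ by $L(F_k(\cdot))$ throughout. Write $V=V_1\cup V_2$ and $E=E_1\cup E_2$, and order $\binom{V}{k}$ first by the value of $i=|\sigma\cap V_1|$ (and hence $j=k-i=|\sigma\cap V_2|$), then lexicographically within each class $\{\sigma:\sigma\cap V_1=\eta_1,\,\sigma\cap V_2=\eta_2\}$ using fixed orders on $\binom{V_1}{i}$ and $\binom{V_2}{j}$. Since the vertex sets $V_1,V_2$ are disjoint, for $\sigma,\tau\in\binom{V}{k}$ the symmetric difference $\sigma\triangle\tau$ is contained in $V_1$ or in $V_2$; hence $\sigma\triangle\tau\in E$ forces either $\sigma\cap V_2=\tau\cap V_2$ with $(\sigma\cap V_1)\triangle(\tau\cap V_1)\in E_1$, or the symmetric statement with the roles of $1$ and $2$ swapped. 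In particular, non-zero off-diagonal entries of $L(F_k(G_1\cup G_2))$ only occur between $\sigma$ and $\tau$ lying in the same class, so the matrix is block-diagonal with blocks indexed by pairs $(i,j)$ with $i+j=k$, $0\le i\le|V_1|$, $0\le j\le|V_2|$.

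Next, I would identify the $(i,j)$-block $B_{ij}\in\Rea^{\binom{|V_1|}{i}\binom{|V_2|}{j}\times\binom{|V_1|}{i}\binom{|V_2|}{j}}$ as a Kronecker sum. On the diagonal, the degree count splits as
\[
|\{e\in E:\,|e\cap\sigma|=1\}| = |\{e\in E_1:\,|e\cap(\sigma\cap V_1)|=1\}| + |\{e\in E_2:\,|e\cap(\sigma\cap V_2)|=1\}|,
\]
and off the diagonal, an entry of $-1$ arises exactly when either the $V_1$-coordinate changes by an edge of $E_1$ and the $V_2$-coordinate is preserved, or vice versa. By our choice of ordering within the block, this is precisely the block structure of $L(F_i(G_1))\otimes I_{\binom{|V_2|}{j}} + I_{\binom{|V_1|}{i}}\otimes L(F_j(G_2))$, i.e.\ $B_{ij}=L(F_i(G_1))\oplus L(F_j(G_2))$, where for $i=0$ or $j=0$ we interpret $L(F_0(\cdot))$ as the $1\times 1$ zero matrix so that $\eps_0^T(G)=-|E(G)|$ is consistent with the stated formula.

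By the standard spectrum-of-Kronecker-sum fact quoted just before Lemma \ref{lemma:vertex_disjoint}, $\lambda_1(B_{ij})=\lambda_1(L(F_i(G_1)))+\lambda_1(L(F_j(G_2)))$. Since the overall matrix is block diagonal,
\[
\lambda_1(L(F_k(G_1\cup G_2))) = \max_{\substack{0\le i\le|V_1|,\,0\le j\le|V_2|\\ i+j=k}} \bigl(\lambda_1(L(F_i(G_1)))+\lambda_1(L(F_j(G_2)))\bigr).
\]
Subtracting $|E|=|E_1|+|E_2|$ from both sides and distributing the subtraction into the max yields the claimed identity.

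The only real checkpoint is making the block identification precise at the level of signs and indexing, together with the two degenerate cases $i=0$ and $j=0$; these are handled cleanly by the convention $\eps_0^T(G)=-|E|$, in parallel with the convention $\teps_0(G)=-|E|$ adopted earlier. Everything else is bookkeeping on top of the same Kronecker-sum decomposition that drives Lemma \ref{lemma:vertex_disjoint}.
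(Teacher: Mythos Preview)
Your proposal is correct and follows exactly the approach the paper intends: the paper omits the proof of this lemma, stating only that it is a ``straightforward analogue'' of Lemma~\ref{lemma:vertex_disjoint}, and your argument carries out precisely that analogy (indeed, the absence of the $\text{sign}(\sigma,\tau)$ factor in $L(F_k(G))$ makes the Kronecker-sum identification even cleaner than for $M_k$). The only wording slip is the clause ``the symmetric difference $\sigma\triangle\tau$ is contained in $V_1$ or in $V_2$,'' which is not true for arbitrary $\sigma,\tau$; what you mean (and use) is that if $\sigma\triangle\tau\in E$ then this holds, since every edge of $G_1\cup G_2$ lies entirely in $V_1$ or entirely in $V_2$.
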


\begin{corollary}\label{cor:token_isolated}
      Let $G=(V,E)$ be a graph, and let $G'$ be obtained from $G$ by adding $t$ isolated vertices. Then, for $1\le k\le |V|+t$,
    \[
    \eps_k^T(G')= \max_{\max\{0,k-t\}\le i\le \min\{k,|V|\}} \eps_i^T(G).
    \]
\end{corollary}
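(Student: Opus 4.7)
The plan is to apply Lemma \ref{lemma:token_component_formula} to the decomposition $G' = G \cup G_1$, where $G_1$ is the graph on a fresh vertex set $U$ of size $t$ with no edges (disjoint from $V$), and then reduce the two-variable maximum on the right-hand side to the claimed one-variable maximum by computing $\eps_j^T$ for the empty graph.

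First I would verify that $\eps_j^T(G_1) = 0$ for every $0 \le j \le t$. Since $G_1$ has no edges, the formula for $L(F_j(G))$ recalled at the start of Section \ref{sec:token} shows that every diagonal entry of $L(F_j(G_1))$ equals $0$, and that there are no off-diagonal nonzero entries either; in other words $L(F_j(G_1))$ is the zero matrix (the case $j = 0$ is trivial by definition of $\eps_0^T$). Hence $\lambda_1(L(F_j(G_1))) = 0 = |E(G_1)|$, giving $\eps_j^T(G_1) = 0$.

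Next, since $G$ and $G_1$ are vertex-disjoint and $G' = G \cup G_1$, Lemma \ref{lemma:token_component_formula} yields
\[
    \eps_k^T(G') = \max\{\eps_i^T(G) + \eps_j^T(G_1) : 0 \le i \le |V|,\ 0 \le j \le t,\ i+j=k\}.
\]
Substituting $\eps_j^T(G_1) = 0$ and eliminating $j = k - i$, the constraints $0 \le i \le |V|$ and $0 \le k-i \le t$ translate precisely to $\max\{0, k-t\} \le i \le \min\{k, |V|\}$, giving
\[
    \eps_k^T(G') = \max_{\max\{0,k-t\}\le i\le \min\{k,|V|\}} \eps_i^T(G),
\]
as required. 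There is no real obstacle in this argument: the only nontrivial input is Lemma \ref{lemma:token_component_formula}, and the remaining content is the observation that token graphs of an edgeless graph are themselves edgeless, so their Laplacians vanish identically.
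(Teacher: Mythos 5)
Your proof is correct and matches the paper's intended argument: the paper omits the proof as a straightforward analogue of Corollary \ref{corollary:M_k_isolated}, whose proof is exactly your argument (decompose $G'=G\cup G_1$ with $G_1$ edgeless, note the relevant excess of $G_1$ vanishes in all degrees, and apply the vertex-disjoint union lemma). Your verification that $L(F_j(G_1))$ is the zero matrix correctly supplies the token-graph counterpart of the observation $M_j(G_1)=0$.
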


\begin{lemma}\label{lemma:token_excess_for_stars}
    Let $n\ge 2$ and $1\le k\le n$. Then,
    \[
        \eps_k^T(S_n)=1.
    \]
\end{lemma}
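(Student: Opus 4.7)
The plan is to prove that $\lambda_1(L(F_k(S_n))) = n$, which immediately yields $\eps_k^T(S_n) = n - (n-1) = 1$ since $|E(S_n)| = n - 1$. (I assume $1\le k\le n-1$, which is the range in which the lemma will actually be applied via Theorem \ref{thm:main_token}.) The key observation is that $F_k(S_n)$ is a bipartite biregular graph: setting $A = \{\sigma\in \binom{V}{k}: 1\in\sigma\}$ and $B = \{\sigma\in \binom{V}{k}: 1\notin\sigma\}$, every edge of $S_n$ contains the vertex $1$, so for every edge $\{\sigma,\tau\}$ of $F_k(S_n)$ the symmetric difference $\sigma\triangle\tau = \{1,i\}$ with $i\ne 1$ forces exactly one of $\sigma,\tau$ to lie in $A$ and the other in $B$. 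A direct count shows that each $\sigma\in A$ has exactly $n-k$ neighbors (one for each $i\in V\setminus\sigma$) and each $\tau\in B$ has exactly $k$ neighbors (one for each $j\in\tau$).

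For the lower bound $\lambda_1\ge n$, I would write down the explicit two-valued eigenvector $x_\sigma = k$ for $\sigma\in A$ and $x_\sigma = -(n-k)$ for $\sigma\in B$. Using the explicit description of $L(F_k(S_n))$ given before Lemma \ref{lemma:token_subadditive}, a one-line computation yields
\[
(Lx)_\sigma = (n-k)\cdot k - (n-k)\cdot\bigl(-(n-k)\bigr) = n(n-k) = n\cdot x_\sigma \quad \text{for } \sigma\in A,
\]
and analogously $(Lx)_\tau = n\cdot x_\tau$ for $\tau\in B$, so $x$ is an eigenvector of $L(F_k(S_n))$ with eigenvalue $n$.

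For the upper bound $\lambda_1\le n$, I would combine the standard quadratic form identity $x^T L(F_k(S_n)) x = \sum_{\{\sigma,\tau\}\in E(F_k(S_n))} (x_\sigma - x_\tau)^2$ with the weighted inequality $(a-b)^2\le \alpha\,a^2+\beta\,b^2$, valid whenever $1/\alpha+1/\beta=1$, applied with $\alpha = n/(n-k)$ and $\beta = n/k$. For each edge $\{\sigma,\tau\}$ with $\sigma\in A$ and $\tau\in B$ this gives $(x_\sigma-x_\tau)^2\le \tfrac{n}{n-k}\,x_\sigma^2 + \tfrac{n}{k}\,x_\tau^2$. Summing over all edges and using that every $\sigma\in A$ appears in exactly $n-k$ edges and every $\tau\in B$ in exactly $k$ edges, the $n-k$ and $k$ factors cancel with the weights, so the right-hand side becomes $n\sum_\sigma x_\sigma^2$. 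Thus $\lambda_1(L(F_k(S_n)))\le n$, matching the lower bound.

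The main obstacle, such as it is, is identifying the right dual description—namely, the bipartite biregular structure—from which both the eigenvector and the correct weights in Cauchy--Schwarz are forced; once that is in place both directions are short and match exactly.
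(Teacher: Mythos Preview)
Your approach differs from the paper's: the paper does not argue directly but simply cites the identity $\lambda_1(L(F_k(S_n)))=n$ from \cite{dalfo2021laplacian} (and \cite{apte2025conjectured}), whereas you give a self-contained proof via the bipartite biregular structure of $F_k(S_n)$. Your structural description of $F_k(S_n)$ and the upper-bound argument using $(a-b)^2\le\alpha a^2+\beta b^2$ with $1/\alpha+1/\beta=1$ are both correct and yield $\lambda_1\le n$ exactly as claimed; this is a clean, elementary replacement for the citation.

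There is one bookkeeping slip in the lower bound. With your choice $x_\sigma=k$ on $A$ and $x_\sigma=-(n-k)$ on $B$, the value $(Lx)_\sigma=(n-k)k-(n-k)\bigl(-(n-k)\bigr)=n(n-k)$ is computed correctly, but this equals $n\cdot x_\sigma=nk$ only when $k=n-k$. The fix is simply to swap the two constants: take $x_\sigma=n-k$ for $\sigma\in A$ and $x_\sigma=-k$ for $\sigma\in B$ (in general, for a biregular bipartite graph with side-degrees $d_A,d_B$, the vector with $x|_A=d_A$ and $x|_B=-d_B$ is a Laplacian eigenvector with eigenvalue $d_A+d_B$). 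After this swap both verifications go through. Your restriction to $1\le k\le n-1$ is also the right call: at $k=n$ the token graph is a single vertex and $\eps_n^T(S_n)=-(n-1)\ne 1$, so the stated range in the lemma slightly overreaches, though that boundary case is never used downstream.
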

\begin{proof}
    It is shown in \cite[Section 5]{dalfo2021laplacian} (see also \cite[Lemma 9]{apte2025conjectured}) that  
    $\lambda_1(L(F_k(S_n)))=n$ for all $1\le k\le n$. Since the number of edges of $S_n$ is $n-1$, we obtain $\eps_k^T(S_n)=1$, as wanted.
\end{proof}

\begin{corollary}\label{cor:token_star_plus_isolated}
    Let $k\ge 1$ and let $V$ be a finite set with $|V|\ge k$. Then, for every $S\in \mathcal{S}(V)$,
    \[
        \eps_k^T(S)\le 1.
    \]
\end{corollary}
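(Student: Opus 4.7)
The plan is to mimic the proof of Corollary \ref{cor:mk_star} verbatim, substituting the token-graph tools (Corollary \ref{cor:token_isolated} and Lemma \ref{lemma:token_excess_for_stars}) for the $M_k$-tools (Corollary \ref{corollary:M_k_isolated} and Lemma \ref{lemma:M_k_excess_for_stars}). Concretely, I would split into two cases according to whether $S$ contains an edge.

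If $S\in \mathcal{S}(V)$ has no edges, then $L(F_k(S))$ is the zero matrix, so $\lambda_1(L(F_k(S)))=0$ and $|E|=0$, giving $\eps_k^T(S)=0\le 1$. If $S$ has at least one edge, then $S$ is (isomorphic to) the star $S_n$ on some $n\ge 2$ vertices together with $t=|V|-n$ isolated vertices. Applying Corollary \ref{cor:token_isolated} to express $\eps_k^T(S)$ in terms of $\eps_i^T(S_n)$, and then invoking Lemma \ref{lemma:token_excess_for_stars}, which gives $\eps_i^T(S_n)=1$ for every $1\le i\le n$, yields
\[
    \eps_k^T(S)=\max_{\max\{0,k-t\}\le i\le \min\{k,n\}} \eps_i^T(S_n)\le 1,
\]
with equality attained as soon as the range contains some $i\ge 1$.

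The only thing worth checking carefully is that the range $[\max\{0,k-t\},\min\{k,n\}]$ indeed contains an index $i\ge 1$ (otherwise the max would be over the empty set, or reduce to $\eps_0^T(S_n)=-(n-1)$ and the bound would still hold, but trivially). Since $k\ge 1$ and $n\ge 2$, the upper endpoint $\min\{k,n\}$ is at least $1$, and the hypothesis $|V|=n+t\ge k$ ensures $\max\{0,k-t\}\le n$, so the range is nonempty and contains an index $\ge 1$; picking $i=\min\{k,n\}$ we get $\eps_k^T(S)=1$.

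This case analysis is entirely routine—there is no real obstacle, since both ingredients (the formula handling isolated vertices and the explicit computation of $\eps_k^T(S_n)$) have already been established. The statement is essentially the token-graph shadow of Corollary \ref{cor:mk_star}, and the argument is a one-to-one translation.
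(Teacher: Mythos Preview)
Your proof is correct and follows exactly the same approach as the paper's own proof: split into the edgeless case (where $\eps_k^T(S)=0$) and the case of a star with isolated vertices, then combine Corollary~\ref{cor:token_isolated} with Lemma~\ref{lemma:token_excess_for_stars}. The paper's version is simply terser, omitting the verification that the index range contains some $i\ge 1$; your added check is harmless and indeed confirms that equality $\eps_k^T(S)=1$ holds whenever $S$ has an edge.
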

\begin{proof}
    If $S$ has no edges, then $\eps_k^T(S)=0\le 1$. If $S$ has at least one edge, then by Lemma \ref{lemma:token_excess_for_stars} and Corollary \ref{cor:token_isolated}, we obtain $\eps_k^T(S)=1$, as wanted.
\end{proof}

It follows from Lemma \ref{lemma:token_subadditive}, Corollary \ref{cor:token_star_plus_isolated}, and Lemma \ref{lemma:subadditive_cover_bound} that $\eps_k^T(G)\le \tau(G)$ for all graphs $G=(V,E)$ and all $1\le k\le |V|$. In particular we obtain $\eps_k^T(G)\le n-1$ for every $n$-vertex graph $G=(V,E)$ and $1\le k\le n$. Moreover, if $G$ is bipartite, then $\eps_k^T(G)\le \lfloor n/2\rfloor$  (a fact first proved in \cite[Corollary 4]{apte2025conjectured}).

The next lemma is an analogue, in the context of token graphs, of Lemma \ref{lemma:Mk_for_matchings}. Since the argument is similar to that of Lemma \ref{lemma:Mk_for_matchings}, we omit the proof.

\begin{lemma}\label{lemma:token_excess_for_matchings}
Let $t\ge k\ge 1$ and let $V$ be a finite set with $|V|\ge 2t$. Let $G\in \mathcal{M}_t(V)$. Then, 
    \[
        \eps_k^T(G)= 2k-t.
    \]
 In particular, if $t\ge 2k$, $\eps_k^T(G)\le 0$.
\end{lemma}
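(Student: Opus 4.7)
The plan is to mimic the proof of Lemma \ref{lemma:Mk_for_matchings} verbatim, substituting Lemma \ref{lemma:token_component_formula} and Corollary \ref{cor:token_isolated} for Lemma \ref{lemma:vertex_disjoint} and Corollary \ref{corollary:M_k_isolated}. First I would reduce to the case $|V|=2t$: if $G_0$ is a perfect matching on $2t$ vertices and $G$ is obtained from $G_0$ by adding $|V|-2t$ isolated vertices, Corollary \ref{cor:token_isolated} gives
\[
\eps^T_k(G) \;=\; \max_{\max\{0,\,k-(|V|-2t)\}\le i\le \min\{k,\,2t\}} \eps^T_i(G_0).
\]
So, assuming the core identity $\eps^T_i(G_0)=2i-t$ for $0\le i\le t$ (with $\eps^T_0(G_0)=-t$), the right-hand side, as a function of $i$ on $0\le i\le k\le t$, is increasing and attains its maximum at $i=k$, yielding $\eps^T_k(G)=2k-t$.

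The main computation is thus $\eps^T_k(G_0)=2k-t$ for a $t$-matching $G_0$ with no isolated vertices. I would write $G_0=H_1\cup\cdots\cup H_t$ as a vertex-disjoint union of single edges, and apply Lemma \ref{lemma:token_component_formula} iteratively to obtain
\[
\eps^T_k(G_0) \;=\; \max\left\{ \textstyle\sum_{j=1}^t \eps^T_{i_j}(H_j)\,:\; 0\le i_j\le 2,\; \sum_j i_j=k\right\}.
\]
Then I would compute the three possible single-edge contributions directly: $\eps^T_0(H_j)=-1$ (since $F_0(H_j)$ is a single vertex with no edges and $|E(H_j)|=1$), $\eps^T_1(H_j)=1$ (since $F_1(H_j)\cong H_j$ has Laplacian spectrum $\{0,2\}$), and $\eps^T_2(H_j)=-1$ (since $F_2(H_j)$ is a single vertex with no edges, because the only pair $\{u,v\}$ satisfies $|e\cap\{u,v\}|=2\ne 1$ for the unique edge $e$). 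Letting $b$ denote the number of indices with $i_j=1$, every summand is $+1$ when $i_j=1$ and $-1$ otherwise, so the total equals $2b-t$.

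Finally, I would observe that the maximum of $2b-t$ subject to $b+2c=k$ and $a+b+c=t$ with $a,b,c\ge 0$ is attained at $b=k$, $c=0$, $a=t-k\ge 0$ (valid because $k\le t$), giving $\eps^T_k(G_0)=2k-t$. The ``in particular'' clause follows immediately since $t\ge 2k$ implies $2k-t\le 0$. I expect no real obstacle: the argument is a direct transcription of Lemma \ref{lemma:Mk_for_matchings}, and the only new ingredients are the three routine single-edge computations above, which are immediate from the definition of $F_k$ and its Laplacian.
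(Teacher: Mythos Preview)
Your proposal is correct and follows exactly the approach the paper intends (the paper omits the proof, pointing to Lemma~\ref{lemma:Mk_for_matchings}). You even correctly catch the one place where the transcription is not verbatim---namely that $\eps^T_2(H_j)=-1$ rather than $0$ as in the $M_k$ setting---and your optimization handles this without issue, still yielding $2k-t$.
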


\begin{proof}[Proof of Theorem \ref{thm:main_token}]
By Lemma \ref{lemma:token_subadditive}, Corollary \ref{cor:token_star_plus_isolated}, Lemma \ref{lemma:token_excess_for_matchings}, and Proposition \ref{prop:star_bound}, we obtain, for every graph $G=(V,E)$ and $1\le k\le |V|$,
\[
    \eps_k^T(G)\le 2(2k-1)=4k-2.
\]
Moreover, if $G$ is bipartite, then
\[
    \eps_k^T(G)=2k-1.
\]
\end{proof}

\section{Concluding remarks}\label{sec:concluding_remarks}

For a graph $H$ and $n\ge 1$, let $\text{ex}(n;H)$ be the maximum number of edges in an $n$-vertex $H$-free graph. It follows from Theorem \ref{thm:main_density_version} that, for every $H$-free graph $G=(V,E)$ and $1\le k\le |V|$, $\eps_k(G)\le \text{ex}(k;H)+(4k-2)\sqrt{k}$. A similar bound, $\eps_k(G)\le \text{ex}(2k;H)$, follows from \cite[Proposition 5.1]{lew2025sums}. We propose the following extension of Conjecture \ref{conj:brouwer}.

\begin{conjecture}\label{conj:brouwer_H_free}
    Let $k\ge 1$, and let $H$ be a graph.  Then, for every $H$-free graph $G=(V,E)$ with $|V|\ge k$,
    \[
        \eps_k(G)\le \text{ex}(k+1;H).
    \]
\end{conjecture}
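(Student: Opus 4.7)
The approach I would take builds on the framework developed throughout the paper: the subadditive function machinery of Section \ref{sec:subadditive} combined with the matrix $M_k(G)$ of Sections \ref{sec:M_k} and \ref{sec:M_k_stars_matchings}. By Lemma \ref{lemma:from_Mk_to_Brouwer},
\[
\eps_k(G) \le \teps_k(G) + \max_{U\in \binom{V}{k}} |E_G(U)|,
\]
and if $G$ is $H$-free then every $k$-vertex induced subgraph of $G$ is $H$-free, so the second term is bounded by $\mathrm{ex}(k;H)$. It would therefore suffice to establish the considerably sharper bound
\[
\teps_k(G) \le \mathrm{ex}(k+1;H) - \mathrm{ex}(k;H)
\]
for every $H$-free graph $G=(V,E)$ with $|V|\ge k$.

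The concrete first step would be to revisit the proof of Proposition \ref{prop:teps_bound} and replace the star-plus-matching decomposition by one tailored to the $H$-free assumption. Starting from Lemma \ref{lemma:good_subgraph_subadditive}, one obtains a spanning subgraph $G'\subseteq G$ with $\teps_k(G')\ge \teps_k(G)$ and $\teps_k(H')>0$ for every non-empty spanning subgraph $H'$ of $G'$; crucially, $G'$ inherits the $H$-freeness of $G$. The plan would be to show that the $H$-freeness of $G'$ forces a stronger structural constraint than the bound $\nu(G')\le 2k-1$ used through Proposition \ref{prop:star_bound}, and, for natural choices of $H$, to argue that $G'$ essentially embeds into a Tur\'an-type extremal graph, on which $\teps_k$ can be computed more directly using the product structure revealed by Lemma \ref{lemma:vertex_disjoint}. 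For $H = K_{r+1}$, one would then aim to bound $\teps_k$ of a Tur\'an graph $T(n,r)$ by the gap $\mathrm{ex}(k+1;K_{r+1})-\mathrm{ex}(k;K_{r+1})\approx (1-1/r)k$, perhaps by exhibiting an explicit eigenvector decomposition reflecting the multipartite structure.

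The principal obstacle is that Conjecture \ref{conj:brouwer_H_free} is at least as strong as Brouwer's Conjecture \ref{conj:brouwer}: taking $H = K_{|V|+1}$, every graph on $V$ is $H$-free and $\mathrm{ex}(k+1;H)=\binom{k+1}{2}$, so the proposed bound collapses to Brouwer's, which remains open in general. Moreover, the gap $\mathrm{ex}(k+1;H)-\mathrm{ex}(k;H)$ is only linear in $k$ for typical $H$, whereas our best general estimate on $\teps_k(G)$ is $O(k^{3/2})$, and this is essentially tight already on stars by Lemma \ref{lemma:M_k_excess_for_stars} (the star $S_{k+1}$ achieves $\teps_k = \sqrt{k}$). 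Consequently, no straightforward refinement of the star/matching decomposition can succeed: a proof must genuinely exploit the global $H$-free structure of $G$ rather than pass through single-star estimates. A promising direction would be to design a different diagonal perturbation of $L(G)^{[k]}$, tailored to the forbidden subgraph $H$ and engineered to vanish precisely on the extremal $H$-free configurations, so that the residual matrix admits a spectral bound matching $\mathrm{ex}(k+1;H)-\mathrm{ex}(k;H)$.
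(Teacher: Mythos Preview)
The statement you are attempting to prove is presented in the paper as an open \emph{conjecture} (Conjecture~\ref{conj:brouwer_H_free}), not as a theorem; the paper offers no proof and only remarks that the bound is sharp, via the example of an extremal $(k+1)$-vertex $H$-free graph padded with isolated vertices. There is therefore no ``paper's own proof'' to compare against.

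Your proposal is accordingly not a proof but a discussion of possible approaches, and in that respect it is accurate: you correctly observe that the case $H=K_{|V|+1}$ recovers Brouwer's Conjecture~\ref{conj:brouwer} exactly, so Conjecture~\ref{conj:brouwer_H_free} is at least as hard and cannot be settled by the methods of the paper. You also correctly trace the route through Lemma~\ref{lemma:from_Mk_to_Brouwer} to the sufficient bound $\teps_k(G)\le \mathrm{ex}(k+1;H)-\mathrm{ex}(k;H)$, and note that the paper's star/matching machinery only yields $\teps_k(G)=O(k^{3/2})$, which is too weak by a factor of roughly $\sqrt{k}$.

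One small clarification: the tightness you cite from Lemma~\ref{lemma:M_k_excess_for_stars} concerns the per-star bound $\teps_k(S_{k+1})=\sqrt{k}$, not the global bound on $\teps_k(G)$. A single star has $\teps_k=\sqrt{k}$, which is comfortably below the linear-in-$k$ target $\mathrm{ex}(k+1;H)-\mathrm{ex}(k;H)$ for non-bipartite $H$; the $k^{3/2}$ loss comes from decomposing into $\Theta(k)$ stars via Proposition~\ref{prop:star_bound}. So the obstruction is not that stars are individually bad, but that the subadditive decomposition is too crude---which is consistent with your final suggestion that a different, $H$-adapted perturbation of $L(G)^{[k]}$ would be needed.
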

Note that the bound in Conjecture \ref{conj:brouwer_H_free} cannot be improved. Indeed, let $G=(V,E)$ be obtained from a $(k+1)$-vertex $H$-free graph with $\text{ex}(k+1;H)$ edges by adding $n-k-1$ isolated vertices. Then, clearly, $\sum_{i=1}^k \lambda_i(L(G))= 2|E|= |E|+\text{ex}(k+1;H)$. That is, $\eps_k(G)= \text{ex}(k+1;H)$.

In this paper, we obtained new upper bounds on $\eps_k(G)$ by studying the parameter $\teps_k(G)$. It would be interesting to have a better understanding of the limitations of this approach. More concretely, it would be interesting to determine an optimal upper bound on $\teps_k(G)$.

\begin{problem}
    Let $k\ge 1$. Find the smallest possible $f(k)$ such that $\teps_k(G)\le f(k)$ for all $G=(V,E)$ with $|V|\ge k$.
\end{problem}

The following conjecture was proposed in \cite{apte2025conjectured}.

\begin{conjecture}[Apte, Parekh, Sud \cite{apte2025conjectured}]
\label{con:matching}
    Let $G=(V,E)$ and let $1\le k\le |V|$. Then, $\eps_k^T(G)\le \nu(G)$.
\end{conjecture}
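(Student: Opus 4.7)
The plan is to push the subadditive-function framework of Sections \ref{sec:subadditive} and \ref{sec:token} as far as possible. Combining Lemma \ref{lemma:token_subadditive}, Corollary \ref{cor:token_star_plus_isolated} (which gives $K=1$ for stars), and Lemma \ref{lemma:subadditive_cover_bound} immediately yields $\eps_k^T(G)\le \tau(G)$ for every graph $G$. By K\"onig's theorem, $\tau(G)=\nu(G)$ when $G$ is bipartite, so Conjecture \ref{con:matching} holds in the bipartite case. In fact, tracing the bipartite branch of the proof of Proposition \ref{prop:star_bound} gives the sharper $\eps_k^T(G)\le \nu(G')\le \nu(G)$, where $G'$ is the spanning subgraph produced by Lemma \ref{lemma:good_subgraph_subadditive}; so the bipartite case is essentially free.

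For non-bipartite $G$, the same route only delivers $\eps_k^T(G)\le \tau(G')\le 2\nu(G')\le 2\nu(G)$, and the remaining task is to eliminate the factor of $2$ arising from $\tau\le 2\nu$. The natural attempt is to decompose $E(G)$ into $\nu(G)$ edge-disjoint pieces, each satisfying $\eps_k^T\le 1$. Since the minimum star decomposition of $G$ has exactly $\tau(G)$ pieces, the building blocks have to be richer than stars. A concrete strategy: fix a maximum matching $M\subseteq E$, and, guided by the Gallai--Edmonds decomposition, assign every edge of $G$ to some matching edge $e\in M$ so that the resulting piece $H_e$ consists of $e$ together with a selected collection of edges of $G$ adjacent to $e$; then prove $\eps_k^T(H_e)\le 1$ for every such $H_e$.

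The main obstacle is establishing this last bound $\eps_k^T(H_e)\le 1$ for the enlarged pieces. A few test cases are encouraging: a direct computation of $L(F_k(C_3))$ shows $\eps_k^T(C_3)\le 0$ for every $k$, so triangles are ``free'' in a decomposition, while Lemma \ref{lemma:token_excess_for_stars} already takes care of pure stars. However, the pieces produced by a Gallai--Edmonds assignment are typically augmented stars (a star together with one or two extra edges coming from odd components), whose Laplacian spectra on $F_k$ are considerably harder to control uniformly in $k$, and verifying the bound $\eps_k^T\le 1$ uniformly over these families looks like the genuine difficulty. A more fundamental obstacle is that any purely decomposition-based approach only uses subadditivity, so closing the remaining factor of two may ultimately require a new ingredient outside this framework---plausibly a direct spectral argument that analyzes the top eigenvector of $L(F_k(G))$ and extracts from its support a matching of $G$ whose size bounds $\eps_k^T(G)$.
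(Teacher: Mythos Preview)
The statement you are trying to prove is \emph{not} proved in the paper: it appears in Section~\ref{sec:concluding_remarks} as an open conjecture attributed to Apte, Parekh, and Sud, and the paper only discusses its consequences (that it would give $\eps_k^T(G)\le 2k-1$ and would resolve Conjecture~\ref{conj:token} for $k=\lfloor |V|/2\rfloor$), not a proof. So there is no ``paper's own proof'' to compare against.

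Your proposal is an accurate diagnosis of exactly how far the paper's machinery goes. The bipartite case you describe---$\eps_k^T(G)\le \tau(G)=\nu(G)$ via Lemma~\ref{lemma:token_subadditive}, Corollary~\ref{cor:token_star_plus_isolated}, Lemma~\ref{lemma:subadditive_cover_bound}, and K\"onig---is correct and is indeed what the paper's tools yield (the paper notes $\eps_k^T(G)\le \tau(G)$ just after Corollary~\ref{cor:token_star_plus_isolated}). For non-bipartite $G$ you correctly identify the factor-of-two gap from $\tau\le 2\nu$ as the genuine obstacle, and you are right that a pure subadditivity/decomposition argument would require bounding $\eps_k^T$ by $1$ on pieces strictly richer than stars. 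That step is not available in the paper and, as you say, is the crux; your proposal does not supply it, so it remains an outline rather than a proof. In short: your assessment is sound, but the conjecture is still open, and nothing in the paper closes the gap you have located.
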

Conjecture \ref{con:matching} would imply, by a simple adaptation of our arguments, that $\eps_k^T(G)\le 2k-1$ for every graph $G=(V,E)$ and $1\le k\le |V|$. Moreover, it would imply $\eps_{k}^T(G)\le \lfloor |V|/2\rfloor$, resolving Conjecture \ref{conj:token} in the special case $k=\lfloor |V|/2\rfloor$. In fact, Apte, Parekh, and Sud showed in \cite[Lemma 13]{apte2025conjectured} that the bound $\eps_k^T(G)\le \lfloor |V|/2\rfloor$ would imply Conjecture \ref{con:matching}, so the two statements are in fact equivalent. Finally, let us also mention that a similar conjecture for $\eps_k(G)$ was proposed in \cite[Conjecture 5.1]{lew2024partition}.

\subsubsection*{Acknowledgments}

We thank James Sud for introducing us to \cite{apte2025conjectured} and in particular to Conjecture \ref{conj:token}, whose study served as the starting point of this work.

\bibliographystyle{abbrv}
\bibliography{biblio}

\end{document}